\documentclass[12pt]{amsart}

\usepackage{graphicx}
\usepackage{amsmath}
\usepackage{amsfonts}
\usepackage{amssymb}
\usepackage{amsthm}
\usepackage{pstricks}
\usepackage{subfigure}
\usepackage{nicefrac}
\usepackage{setspace}
\setstretch{1.25}

\addtolength{\textwidth}{4cm}
\addtolength{\evensidemargin}{-2.05cm}
\addtolength{\oddsidemargin}{-2.15cm}
\addtolength{\textheight}{1.1cm} \addtolength{\topmargin}{-0.5cm}

\newtheorem{theorem}{Theorem}

\newtheorem{lemma}[theorem]{Lemma}

\newtheorem{proposition}[theorem]{Proposition}
\newtheorem{remark}[theorem]{Remark}
\newtheorem*{lem}{Lemma}

\theoremstyle{definition}
\newtheorem{example}[theorem]{Example}


\newenvironment{thmbis}[1]
  {%
   \addtocounter{theorem}{-1}%
   \begin{theorem}}
  {\end{theorem}}

\usepackage{color}
\usepackage[normalem]{ulem}

\begin{document}

\title{Solving the social choice problem under equality constraints}

\author[Juan A. Crespo]{Juan A. Crespo}
\address[Juan A. Crespo]{Facultad de Ciencias Econ\'omicas \\ Universidad Aut\'onoma de Madrid \\ 28049 Cantoblanco, Madrid \\ Spain}
\email{juan.crespo@uam.es}

\author[J. J. S\'anchez-Gabites]{J. J. S\'anchez-Gabites}
\address[J. J. S\'anchez-Gabites]{Facultad de Ciencias Econ\'omicas \\ Universidad Aut\'onoma de Madrid \\ 28049 Cantoblanco, Madrid \\ Spain}
\email{JaimeJ.Sanchez@uam.es}

\begin{abstract} In a context where a decision has to be taken collectively by several agents, the social choice problem consists in deciding whether there exists a socially acceptable rule that aggregates the individual preferences of the agents into a social one. We analyze this problem for sets of alternatives defined by equality constraints and obtain a solution that, in sharp contrast to the classical ones, is expressed in an elementary language and ultimately reduces the social choice problem to a standard constrained optimization problem.

By considering a toy example we shall be able to interpret this general result in terms of the rationality (in the sense of Economics) of the design of the set of alternatives rather than, surprisingly, that of the agents involved in the actual social choice problem.
\medskip

{\it Keywords}. Social choice, optimal design.

{\it JEL classification codes}. D71, C60, D63.
\end{abstract}

\maketitle

\section{Introduction}

The \emph{social choice problem} consists in deciding whether there exists a \emph{universal rule} that aggregates the individual preferences of several agents into a social one. More precisely, suppose that an element needs to be selected out of a \emph{set of alternatives} $X$ on the basis of the individual preferences of a number of agents, each of which may well want to choose a different element from $X$. In order to make a collective (or social) decision it is necessary to determine an \emph{ex-ante} rule that, whatever the individual preferences of the agents, is able to aggregate them into a social preference; the element that will finally be selected from $X$. 
The \emph{aggregation rule} is required to satisfy certain axioms which ensure that the aggregation process is performed in a ``socially acceptable'' manner (for instance, it is usually required, at least, that it considers all the agents equally). When no acceptable aggregation rule exists it is customary to speak of a \emph{social choice paradox}.

There are several social choice models in the literature. Since we shall consider sets of alternatives determined by equality constraints, which generally consist of a continuum of points, the most appropriate framework for our purposes seems to be the so-called \emph{topological} social choice model. We will describe it in some detail later on. This model has received considerable attention since it was introduced in the past century, having been independently studied by authors such as Aumann \cite{Aumann1932}, Eckmann \cite{eckmann1}, Hilton \cite{Eck-Hil-Gan1962, Hilton1997}, Chichilnisky and Heal \cite{chiheal1} or, more recently, Weinberger \cite{Weinberger2004} (the interested reader may enjoy Eckmann's historical account \cite{eckmann2}). Their solutions to the social choice problem may be roughly summarized in the following result of Chichilnisky and Heal (\cite[Theorem 1, p. 82]{chiheal1}): 
	
	\smallskip
	{\it Theorem.} Let the space of alternatives $X$ be a parafinite CW complex. The social choice problem over $X$ has an affirmative answer if, and only if, each component of $X$ is contractible.
\smallskip

This result, beautiful as it is, has several drawbacks which may have prevented it from becoming more popular among theorists working in this and related areas:
\begin{itemize}
	\item The language in which it is stated belongs to a rather specialized area of pure mathematics (namely, geometric topology) and is very far from the elementary language of the original social choice problem. In particular, there is no intuitive interpretation of its meaning.
	\item It includes the technical condition that the space of alternatives should be a CW complex, which even in the natural case that we consider in this paper (see below) may not be satisfied at all.
	\item It does not provide a direct solution to the social choice problem but rather translates it into another hard mathematical problem; that of deciding whether a given space is contractible.
\end{itemize}

In this paper we offer a solution to the social choice problem over sets of alternatives $X \subseteq \mathbb{R}^n$ defined by means of a collection of equality constraints\footnote{This class of sets is natural in Economics, but we emphasize that throughout the paper we shall make no assumptions with an economical content or about the particular interpretation (should there be any) of the variables and constraints that define $X$.}; that is, sets of the form \begin{equation} \label{eq:X} X = \{p \in \mathbb{R}^n : g_i(p) = c_i \text{ for } i = 1, \ldots,m\} \end{equation} where the $g_i$ are differentiable maps from $\mathbb{R}^n$ into $\mathbb{R}$. In contrast with the existing results, our solution avoids the issues enumerated above: it is expressed in an elementary language which does not involve any topological concepts, the hypotheses over the set of alternatives are also elementary and easy to check, and the answer is ultimately given in terms of a standard constrained optimization problem.

Although we will take the theorem of Chichilnisky and Heal as our starting point, we would like to remark that sets of the form \eqref{eq:X} may well not be CW complexes, so the theorem does not apply directly to them. Most of the work in this paper is aimed at developing new mathematical techniques to overcome this difficulty.

Our first result provides a necessary condition for the social choice problem over $X$ to have a solution:
\smallskip

{\bf Theorem A.} Consider any one of the constraints $g_i(p) = c_i$ that define $X$. Suppose that the set $Y_i \subseteq \mathbb{R}^n$ defined by the \emph{remaining} constraints is bounded, so that the map $g_i$ attains both a global maximum and minimum value over $Y_i$. Then it is necessary that $c_i$ coincides with either of these for the social choice problem over $X$ to have a solution.
\smallskip

We express the necessary condition of the theorem by saying that the constraint $g_i(p) = c_i$ must be \emph{optimal with respect to the other constraints}, and say that $X$ is \emph{optimally designed} if all the constraints for which $Y_i$ is bounded do indeed satisfy this condition.\footnote{If $Y_i$ is not bounded then Theorem A says nothing about the corresponding $c_i$.} With this language Theorem A can be more expressively stated as follows: \emph{a social choice problem that is not optimally designed will lead to a social choice paradox}. Our choice of the word ``design'' is intended to account for the fact that the values of the $c_i$ have to be carefully selected to avoid a social choice paradox. By contrast, if the constraints $g_i(p) = c_i$ are dictated by some natural or random process (in a nontechnical sense of the word), in all likelihood the social choice problem over $X$ will have no solution. 

It is now natural to ask whether optimal design is not only necessary but also sufficient to avoid social choice paradoxes. Strictly speaking it is not, as shown by Example~\ref{ex:torus} in Section~\ref{sec:sufficient}. However, we will prove that such cases are exceptional, establishing the following:
\smallskip

{\bf Theorem B.} \emph{A generic social choice problem that is designed optimally has a solution.}
\smallskip

Roughly speaking, Theorem B means that for almost every function $g_i$, choosing the value $c_i$ of the constraint to be optimal with respect to the others will define a set $X$ over which the social choice problem can be solved. Admitting that the functions $g_i$ one may encounter in practice fall into this generic category, we may summarize Theorems A and B in the following imprecise but suggestive ``principle of optimal design'':
\smallskip

{\it A problem of social choice under equality constraints has a solution if, and only if, it is optimally designed.}
\smallskip

As announced earlier, the statement of this solution to the social choice problem is elementary and involves no topological notions whatsoever. Also, checking explicitly whether a particular set $X$ is optimally designed is a routine exercise in constrained optimization, so this criterion is readily applicable.

On a more theoretical vein, if one broadly thinks of constrained optimization as one of the concerns of Economics our results show that the latter discipline plays a crucial role in avoiding social choice paradoxes. We now illustrate this with a toy example which leads to an interesting economical interpretation of the principle of optimal design.

Suppose that several agents $A_i$ have to decide collectively on which particular bundle of production factors a firm is going to use. These agents are not concerned with any economical considerations whatsoever, but their choice is at least required to satisfy constraints dictated by the production technology and the external demand. There is no reason to expect that the set of alternatives determined by these constraints will be optimally designed, and so the agents $A_i$ will encounter a social choice paradox (Theorem A).

Now consider another agent $B$ (say the owner of the firm) who is totally independent from the agents $A_i$ and, motivated only by economical considerations, measures the performance of the firm through some utility function $U$ and requires it to attain certain value $u$. This adds one further constraint to the ones mentioned above and reduces the set of alternatives among which the agents $A_i$ can choose. Now:
\begin{itemize}
	\item[(i)] If agent $B$ proceeds in the way that is \emph{economically rational}, she will require the utility level $u$ to be the maximum allowed by the other constraints. Thus, the last constraint will be optimal with respect to the remaining ones and (Theorem B) this will remove the social choice paradox previously encountered by the agents $A_i$.
	\item[(ii)] If agent $B$ does not maximize the utility level but only requires it to attain certain suboptimal value instead, the agents $A_i$ will still encounter a social choice paradox\footnote{Actually, \emph{minimizing} the utility would also avoid a social choice paradox. This alternative is perfectly acceptable from a mathematical point of view but in the case under consideration it can be discarded on economical grounds.} (Theorem A).
\end{itemize}

Let us summarize. The agents $A_i$ face a social choice problem upon whose design they cannot operate and their only goal is to perform their choice in a socially acceptable manner. Agent $B$, who can partially design the set of alternatives but does not take part in the actual social choice (at least not necessarily), is only concerned with the purely economical problem of fixing the utility level $u$ that the firm should achieve. These two problems are, in principle, totally unrelated. The second one is solved axiomatically in Economics by postulating that agent $B$ behaves rationally and therefore maximizes $U$. What we have shown is that by behaving in this way, and only by doing so, agent $B$ simultaneously (and possibly inadvertently) solves also the first problem, avoiding a social choice paradox. Thus the two problems, in spite of their very different nature, have the same solution.\footnote{It might be convenient to remark that this is very different from the situation usually considered in Welfare Economics, where one maximizes a social welfare function which is some sort of average of the utility functions of the individual ones. Here the utility function $U$ of agent $B$ is not assumed to bear any particular relationship to the individual utilities of the agents $A_i$, which in fact play no role in our considerations.}

We finish this Introduction with a brief account of how the rest of the paper is organized. In the following section we recall the basic definitions of the topological social choice model and give the formal statements of Theorems A and B, which are Theorems \ref{teo:main} and \ref{teo:iff}. Their proofs are given in Sections \ref{sec:proof_main} and \ref{sec:sufficient} respectively. Since the arguments are somewhat involved, in both cases we have tried to give first an outline of the main ideas that come into play. Some technical lemmas are postponed to appendices A and B. Some notions from homotopy theory, algebraic topology, differential geometry and Morse theory will be needed to follow the arguments comfortably, especially from Section \ref{sec:sufficient} onwards. We have included suitable references where appropriate.

\section{Formal statement of results} \label{sec:model}

\subsection{The topological social choice model} \label{subsec:framework} As mentioned earlier there are several models of social choice in the literature, differing both in the nature of the input that the agents provide the aggregation rule with and in the axioms that the latter is required to satisfy. For instance, in the well known Arrowian model \cite{Arrow1951} each agent orders all the elements of $X$ and the aggregation rule yields another ordering that, in order to be socially acceptable, is essentially required to be compatible with the orderings established by the agents. However, since in our context $X$ will in general consist of a continuum of alternatives, it does not seem reasonable to require that the agents order all of them, but simply choose their favourite one $p_i$ instead. An aggregation rule will therefore be a mapping $F(p_1,p_2,\ldots,p_k) = p$, where the $p_i$ are the choices of the $k$ agents and $p$ is the social choice. Since the agents have absolute freedom in choosing their preferred alternative, $F$ should be defined for any tuple $(p_1,p_2,\ldots,p_k) \in X \times \stackrel{(k)}{\ldots} \times X$; that is, it should be a mapping $F : X \times \stackrel{(k)}{\ldots} \times X \longrightarrow X$. Moreover, it is natural to assume that the agents cannot distinguish between two alternatives that are extremely close to each other and, as a consequence, switching from one to the other should only cause a small change in the aggregate choice $F(p_1,\ldots,p_k)$. This is ensured if $F$ is continuous, and it is this property what is characteristic of the topological social choice model introduced by Chichilnisky \cite{chichil1, chichil2} (for more details on this see \cite[after Remark 2.3.1, p. 6]{lauwers2000}). Finally, for the aggregation rule $F$ to be socially acceptable, it is required to satisfy the following two properties: (i) all the agents should be equally considered (\emph{anonimity}), (ii) if all the agents happen to choose the same element $p \in X$, then the aggregated choice should be that same element $p$ (\emph{unanimity}).

Summarizing, a \emph{social choice function} is a continuous mapping $F : X \times \stackrel{(k)}{\ldots} \times X \longrightarrow X$ that satisfies the following axioms:
\begin{itemize}
	\item[(A)] $F(p_1,p_2,\ldots,p_k)$ should be independent of the ordering of the $p_i$ (anonimity).
	\item[(U)] $F(p,p,\ldots,p)=p$ for every $p \in X$ (unanimity).
\end{itemize}

(Notice that it makes sense to speak of continuity since $X$ is a subset of some $\mathbb{R}^n$). An aggregation rule $F$ that satisfies the above three axioms is called a \emph{social choice function} over the given set of alternatives $X$. With this terminology, the social choice problem can be stated as follows: \emph{given a set of alternatives $X$, does there exist a social choice function over $X$}?

When $X$ is simple enough the existence of social choice functions can sometimes be easily established directly. For example, when $X$ is a convex subset of $\mathbb{R}^n$ then the mean $F(p_1,\ldots,p_k) := \frac{1}{k}\sum_{i=1}^k p_i$ is a social choice function over $X$. Of course, it can no longer be assured to be a social choice function over a set $X$ defined by equality constraints because as soon as any of the defining constraints is nonlinear the mean of two elements from $X$ does not need to belong to $X$ again.\footnote{In passing, let us observe that if all the constraints defining $X$ are linear then $X$ is indeed a convex subset of $\mathbb{R}^n$ and therefore the mean solves the social choice problem over $X$. This does not contradict our results because none of the sets $Y_i$ obtained by removing one of the constraints is bounded.}

By contrast, showing that a given set of alternatives $X$ does not admit any social choice function is a quite challenging mathematical problem even when $X$ is a very simple set such as the unit circumference in the plane $\mathbb{R}^2$ (a case analyzed by Chichilnisky \cite{chichil1} when considering linear preferences on the commodity space of bundles of two collective goods). A paper by Baigent \cite{Baigent2011} gives a rather complete idea of the argument starting from a minimal mathematical background.



\subsection{Our results} \label{subsec:statements} As mentioned earlier, we are interested in the social choice problem over sets of alternatives $X \subseteq \mathbb{R}^n$ that are defined by a collection of equality constraints. Thus, let there be a collection of smooth maps $g_1, \ldots, g_m : \mathbb{R}^n \longrightarrow \mathbb{R}$ and values $c_1, \ldots, c_m \in \mathbb{R}$ which determine the set of alternatives $X$ as \[X = \{p \in \mathbb{R}^n : g_i(p) = c_i \text{ for every } 1 \leq i \leq m\}.\]

The constraints can be completely arbitrary (in particular, they do not need to be linear) but we shall always assume that the set of alternatives $X$ they determine is bounded, which is a reasonable requisite in most problems that try to capture some aspect of reality. Of course, for the social choice problem to make sense $X$ should be nonempty. Also, if $X$ is finite then the problem has a somewhat trivial answer in the affirmative (see Section \ref{sec:sufficient}), so the case of interest is when $X$ is actually infinite. A convenient way of encapsulating these considerations consists in requiring that the number of constraints $m$ is strictly smaller than the dimension of the ambient space $n$; that is, $m < n$.
\medskip

We will first consider an auxiliary base case assuming that the $g_i$ satisfy the standard \emph{constraint qualifications}; that is, the gradients of the $g_i$ are linearly independent at each $p \in X$. Then we will definitely run into a social choice paradox:

\begin{proposition}\label{prop:production} Let the set of alternatives $X$ be bounded, $m < n$, and assume that the $g_i$ satisfy the constraint qualifications. Then the social choice problem over $X$ has no solution.
\end{proposition}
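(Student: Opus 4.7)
The strategy is to reduce the statement to the Chichilnisky--Heal theorem quoted in the Introduction. The constraint qualification hypothesis asserts that the joint map $G = (g_1,\ldots,g_m) : \mathbb{R}^n \to \mathbb{R}^m$ has $(c_1,\ldots,c_m)$ as a regular value. The regular value theorem then gives that $X = G^{-1}(c_1,\ldots,c_m)$ is a smooth submanifold of $\mathbb{R}^n$ without boundary and of dimension $n - m \geq 1$. Being the preimage of a point under a continuous map, $X$ is also closed in $\mathbb{R}^n$; combined with boundedness, this makes $X$ compact. (Assuming as usual that $X$ is nonempty, since otherwise there is nothing to prove.)

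Next I would verify that $X$ fits the framework of Chichilnisky--Heal, i.e.\ that it is a parafinite CW complex. For this one invokes the classical fact (due to Cairns and Whitehead) that every smooth manifold admits a smooth triangulation; in our case $X$ is compact, so the resulting CW structure is finite, hence certainly parafinite.

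The heart of the argument is then to show that no connected component of $X$ is contractible. Each connected component $C$ of $X$ is a clopen subset of the compact $(n-m)$-manifold $X$, so it is itself a closed (compact, without boundary) manifold of dimension $d = n - m \geq 1$. Such a manifold cannot be contractible: by Poincar\'e duality with $\mathbb{Z}/2$ coefficients (which applies irrespective of orientability) one has $H_d(C; \mathbb{Z}/2) \cong H^0(C;\mathbb{Z}/2) \neq 0$, whereas a contractible space has vanishing reduced homology in all positive dimensions. Applying the Chichilnisky--Heal theorem now yields that no social choice function over $X$ can exist.

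\textbf{Main obstacle.} The structural steps (regular value theorem, triangulation, Poincar\'e duality) are standard and mild in this base case; the serious difficulty would only arise in the general setting of Theorem A, where constraint qualifications are \emph{not} assumed and $X$ may fail to be a manifold or even a CW complex. For this proposition, the main technical point one must be careful about is ensuring the CW-complex hypothesis so that Chichilnisky--Heal may be applied cleanly---a subtlety that the authors note in the Introduction is precisely what forces new techniques in later sections, but which is unproblematic here thanks to smoothness and the regularity of $(c_1,\ldots,c_m)$.
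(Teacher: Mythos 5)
Your proof is correct and takes essentially the same route as the paper: apply the regular value theorem to see that $X$ is a compact boundariless manifold of dimension $n-m\geq 1$, triangulate to place it in the parafinite CW setting of Chichilnisky--Heal, and then use Poincar\'e duality to rule out contractibility of each component. One small but genuine improvement on your side: you run Poincar\'e duality with $\mathbb{Z}/2$ coefficients, which works for non-orientable manifolds as well, whereas the paper's Appendix~A states Poincar\'e duality over $\mathbb{R}$ in a form ($H_j(U)\cong H_{d-j}(U)$) that is only valid for orientable closed manifolds; your version closes that gap.
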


Although the proof of Proposition \ref{prop:production} will be rather easy given the appropriate tools, it is a quite useful result. For example, the classical case when $X$ is a sphere can be analyzed very easily: an $(n-1)$--dimensional sphere is described by a single ($m=1$) implicit equation $x_1^2 + \ldots + x_n^2 = 1$ in $\mathbb{R}^n$ which evidently satisfies the constraint qualifications, so the social choice problem over spheres has no solution for $n \geq 2$.

Sard's theorem from differential geometry (see for instance \cite[p. 227]{kosinski1}) states that the set of vectors $(c_1,\ldots,c_m) \in \mathbb{R}^m$ for which the constraints $g_i(p) = c_i$ do not satisfy the constraint qualifications has Lebesgue measure zero. Otherwise stated, with probability one the values $c_i$ that appear at the right hand side of the constraints $g_i(p) = c_i$ are such that the $g_i$ satisfy the constraint qualifications. It then follows from Proposition \ref{prop:production} that a generic social choice problem under equality constraints will have no solution. This accords with the claim of Theorem A that only problems that are designed in a very specific way (that is, optimally) can be solved.
\medskip

Let us move on to the precise statement of Theorem A. Observe that Proposition \ref{prop:production} entails that, in order to have any hope of solving the social choice problem over $X$, it is necessary that the $g_i$ do not satisfy the constraint qualifications. It will be enough to consider the simplest case, when $m-1$ of the constraints do satisfy them and it is only the addition of the remaining one that spoils this condition. For definiteness we shall assume that it is the first $m-1$ constraints that satisfy the constraint qualifications; that is, the gradients of $g_1, \ldots, g_{m-1}$ are linearly independent at each point of the feasible set they determine \[Y_m = \{p \in \mathbb{R}^n : g_1(p) = c_1, \ldots, g_{m-1}(p) = c_{m-1}\}.\]

Having chosen the last constraint as the one on which we are going to focus, in the sequel we shall safely omit the subindex from $Y_m$ and simply write $Y$ without risk of confusion. Then the precise statement of Theorem A is as follows:

\begin{theorem} \label{teo:main} Let the set $Y$ be bounded and connected, $m < n$, and assume that the first $m-1$ constraints satisfy the constraint qualifications.\footnote{Also, as a technical hypothesis, it is necessary to assume that $g_m|_Y$ has only finitely many critical values. This condition will be fulfilled in any problem with a reasonable economical interpretation, so in practice it is barely stringent. For instance, it is automatically satisfied whenever $g_m$ is an analytic function (polynomials being the simplest case).} If the social choice problem over $X$ has a solution then the last constraint must be optimal with respect to the remaining ones; that is, $c_m$ must be a global optimum of $g_m|_Y$.
\end{theorem}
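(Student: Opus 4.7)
The plan is to split into two cases according to whether $c_m$ is a regular or a critical value of the restriction $g_m|_Y$, reducing the former directly to Proposition~\ref{prop:production} and attacking the latter by a deformation argument that transports a hypothetical social choice function on $X$ onto a nearby regular level set. I first handle the regular-value case. If $c_m$ is a regular value of $g_m|_Y$, then at each $p\in X$ the gradient $\nabla g_m(p)$ fails to lie in $\mathrm{span}\{\nabla g_1(p),\ldots,\nabla g_{m-1}(p)\}$, so combined with the hypothesis that the first $m-1$ gradients are linearly independent on $Y$, the full collection $\nabla g_1(p),\ldots,\nabla g_m(p)$ is linearly independent at every $p\in X$. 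Hence all $m$ constraints jointly satisfy the constraint qualifications; since $X\subseteq Y$ is bounded and $m<n$, Proposition~\ref{prop:production} applies and already yields a social choice paradox.

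The substantive case is when $c_m$ is a critical value of $g_m|_Y$. Because $c_m$ is not a global optimum, it lies strictly between $c_m^-:=\min_Y g_m$ and $c_m^+:=\max_Y g_m$, both of which are themselves critical values by compactness of $Y$. Using the technical finiteness assumption I can choose $\varepsilon>0$ so small that $(c_m-\varepsilon,c_m+\varepsilon)\setminus\{c_m\}$ consists entirely of regular values of $g_m|_Y$ and both endpoints remain inside the range $[c_m^-,c_m^+]$. The regular level sets $X^{\pm}:=g_m^{-1}(c_m\pm\varepsilon)\cap Y$ are then smooth, bounded, nonempty manifolds, and by the regular case neither of them admits a social choice function. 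My strategy is then to derive a contradiction by manufacturing, from a hypothetical social choice function $F\colon X^k\to X$, an analogous map on one of $X^\pm$.

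Concretely, a gradient-like vector field for $g_m|_Y$ on the slab $g_m^{-1}([c_m,c_m+\varepsilon])\cap Y$ should yield a deformation retraction $r$ of the slab onto $X$, and composing $F$ with $r$ on the input side and with a suitably reparametrized inverse flow on the output side ought to produce a continuous, anonymous, unanimous map $(X^+)^k\to X^+$, contradicting the regular case. The main obstacle lies in making this transfer rigorous, and it has two intertwined difficulties. First, the retraction onto $X$ is delicate because $X$ need not be a manifold at its critical locus, whereas classical Morse-theoretic statements describe sublevel sets rather than level sets themselves; producing a continuous $r$ will presumably require either a vector field adapted to the singularities of $X$ or an auxiliary limit argument. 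Second, transferring $F$ from $X$ back into the regular level $X^+$ is not automatic, since $r|_{X^+}\colon X^+\to X$ need not admit a continuous section; the construction will have to exploit the unanimity axiom to guarantee that the reparametrized flow lands on $X^+$ precisely when the input sits on the diagonal. I expect these two points to be where the bulk of the technical work resides, and where the technical lemmas relegated to the appendices come into play.
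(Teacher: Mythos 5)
Your treatment of the regular-value case is exactly the paper's: regular value $\Rightarrow$ full constraint qualifications on $X$ $\Rightarrow$ Proposition~\ref{prop:production}. The divergence, and the gap, is in the critical-value case.

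Your plan is to transport the hypothetical SCF $F$ on $X=X_u$ to an SCF on a nearby regular level $X^{\pm}$, which would then contradict the regular case. This route runs into an obstruction you cannot remove, not merely a technical one. First, the gradient flow of $g_m|_Y$ does not carry points between $X_u$ and $X^{\pm}$: near critical points the trajectories approach the critical level only asymptotically, so there is no ``inverse flow'' mapping $X$ onto $X^+$, and in general $X_u$ is not even a deformation retract of the slab (the paper's Lemma~\ref{lem:morse} delivers only a map $r$ homotopic to the identity whose image lies in an arbitrary neighbourhood of $X_u$, never a retraction onto $X_u$ itself). Second, and more fundamentally, the map $r|_{X^+}\colon X^+\to X_u$ typically has no continuous section: already in the toy example of Figure~\ref{fig:explanation}, where $X_u$ is a figure-eight and $X^+$ a pair of disjoint circles, the map is two-to-one over the pinch point, so a section would have to discontinuously choose a branch. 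Unanimity is of no help here because the missing section $\sigma\colon X_u\to X^+$ would have to be defined before $F$ is ever invoked; it is a purely topological obstruction. Thus the critical case as you set it up cannot be closed.

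The paper takes a genuinely different tack at this juncture. Instead of trying to push $F$ onto a regular level, it extends $F$ from $X_u$ to a \emph{homotopy} social choice function on the entire slab $X_{[u_1,u_2]}$: an extension of $F$ to a neighbourhood $U$ of $X_u$ in the slab (Lemma~\ref{lem:extend1}) is pre-composed with the near-retraction $r$ of Lemma~\ref{lem:morse}, yielding a map satisfying unanimity and anonymity only up to homotopy, which is exactly enough for the Chichilnisky--Heal argument (Lemma~\ref{lem:chichil}). Since $X_{[u_1,u_2]}$ \emph{is} a compact manifold with boundary (both $u_1,u_2$ regular), the theorem applies and forces every component to be contractible; Lemma~\ref{lem:var2} then says each component has \emph{connected} boundary, hence its boundary lies entirely in $X_{u_1}$ or entirely in $X_{u_2}$. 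Both $X_{u_1}$ and $X_{u_2}$ are nonempty because $Y$ is compact and connected so $g_m(Y)=[u_{\rm min},u_{\rm max}]$; a path in $Y$ joining the two kinds of components must, by the intermediate value theorem applied to $g_m\circ\gamma$, cross the level $u$ at a time when it has already left $X_{[u_1,u_2]}$, a contradiction. In short: the paper never returns to a regular level set, it works with the slab itself and derives the contradiction from connectedness, sidestepping precisely the two obstructions you flagged.
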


The connectedness assumption on $Y$ means that it consists only of a single ``piece'' and is included just for convenience: if $Y$ is not connected, that is, if it consists of several disjoint pieces, then the conclusion of the theorem is that $c_m$ must be the global optimum value of $g_m$ restricted to one of those pieces.

Now suppose that the first $m-1$ constraints that define $Y$ are already given (and, as before, satisfy the constraint qualifications) and a designer, aware of the necessary condition given by Theorem \ref{teo:main} and willing to avoid a social choice paradox, sets the value $c_m$ of the last constraint to be optimal with respect to the others. Denote by $\mathcal{C}^{\infty}(\mathbb{R}^n,\mathbb{R})$ the set of all differentiable mappings $g : \mathbb{R}^n \longrightarrow \mathbb{R}$. Then the following result, which is the formal counterpart of Theorem B from the Introduction, holds:

\begin{theorem} \label{teo:iff} Let the set $Y$ be bounded and connected, $m < n$, and assume that the first $m-1$ constraints satisfy the constraint qualifications. There is an open and dense set $\mathcal{M} \subseteq \mathcal{C}^{\infty}(\mathbb{R}^n,\mathbb{R})$ such that whenever $g_m$ belongs to $\mathcal{M}$, setting the last constraint $g_m(p) = c_m$ to be optimal with respect to the others ensures that the social choice problem over $X$ has a solution in the affirmative.
\end{theorem}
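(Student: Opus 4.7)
My plan is to reduce Theorem~\ref{teo:iff} to a trivial positive case by showing that, for generic $g_m$, the set $X$ is \emph{finite}. This suffices because, as noted in the excerpt, the social choice problem over a finite set of alternatives admits a solution in the affirmative (equivalently, under the Chichilnisky--Heal criterion, a finite discrete set is a $0$-dimensional CW complex whose components are singletons, hence contractible).

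The central step is the genericity assertion. Since $g_1,\ldots,g_{m-1}$ satisfy the constraint qualifications and $Y$ is bounded, it is a compact smooth submanifold of $\mathbb{R}^n$ without boundary. I would define
\[ \mathcal{M} := \{g \in \mathcal{C}^{\infty}(\mathbb{R}^n,\mathbb{R}) : g|_Y \text{ is a Morse function on } Y\}. \]
Classical Morse theory tells us that Morse functions form an open and dense subset of $\mathcal{C}^{\infty}(Y,\mathbb{R})$ in the Whitney topology. The restriction map $\rho : \mathcal{C}^{\infty}(\mathbb{R}^n,\mathbb{R}) \to \mathcal{C}^{\infty}(Y,\mathbb{R})$ is continuous and surjective (by Whitney extension), so openness of $\mathcal{M}$ is immediate. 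For density I would perturb any given $g$ by adding a term of the form $\varphi(p)\sum_{i=1}^n \varepsilon_i x_i$, where $\varphi$ is a smooth bump function equal to $1$ on $Y$ and supported in a small tubular neighborhood of it; a standard jet-transversality argument then shows that for almost every $(\varepsilon_1,\ldots,\varepsilon_n) \in \mathbb{R}^n$ the resulting perturbation lies in $\mathcal{M}$, and the bump keeps the perturbation small in the appropriate Whitney norm.

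Granted this, if $g_m \in \mathcal{M}$ then $g_m|_Y$ has only finitely many critical points on $Y$. Because $Y$ has no boundary, every $p \in X = (g_m|_Y)^{-1}(c_m)$ is automatically a critical point of $g_m|_Y$ (as $c_m$ is a global optimum), so $X$ is contained in the finite critical set, hence finite. The main obstacle I anticipate lies in the density step: one must carefully verify that the bump-function construction yields a perturbation that is genuinely small in the Whitney topology on $\mathcal{C}^{\infty}(\mathbb{R}^n,\mathbb{R})$ while still producing a Morse restriction on $Y$. A secondary, purely cosmetic, point is to exhibit explicitly a social choice function when $X$ is finite (for instance, plurality with a lexicographic tie-breaking rule), which is trivially continuous since $X^k$ inherits the discrete topology.
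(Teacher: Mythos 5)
Your proposal takes essentially the same route as the paper: it defines $\mathcal{M}$ as the set of smooth $g$ whose restriction to $Y$ is a Morse function, observes that optimality of $c_m$ forces $X$ to lie in the finite critical set of $g_m|_Y$ (hence $X$ is finite, which trivially resolves the social choice problem), proves openness of $\mathcal{M}$ from the fact that the Morse condition is a $\mathcal{C}^2$-open condition, and proves density by a bump-supported linear perturbation $g - \theta \cdot \sum a_i x_i$ using a Sard-type argument. The paper phrases openness by checking the nondegeneracy conditions directly rather than factoring through the restriction map, and packages the density step as its Proposition on linear perturbations, but these are only presentational differences.
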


The openness and density of $\mathcal{M}$ can be interpreted intuitively as meaning that functions in $\mathcal{M}$ are \emph{robust} (a small perturbation of a function in $\mathcal{M}$ still belongs to $\mathcal{M}$) and very \emph{abundant} (any function can be turned into a function in $\mathcal{M}$ by a perturbation as small as desired). It is in this sense that the word ``generic'', common in Functional Analysis, was used in the statement of Theorem B.

\section{The necessity of optimal design (proof of Theorem \ref{teo:main})} \label{sec:proof_main}

Since the proof of Theorem \ref{teo:main} is somewhat involved we have thought it convenient to begin with an outline of the argument. This is the content of the first subsection, where we also prove Proposition \ref{prop:production}. After developing some auxiliary results in \ref{subsec:hscf} and \ref{subsec:extend}, the proof of the theorem is finally given in subsection \ref{subsec:proof}.

We will need the following two lemmas concerning the contractibility of manifolds:

\begin{lemma} \label{lem:var1} Let $M$ be a compact manifold of dimension $d \geq 1$ and without boundary. Then none of the components of $M$ is contractible.
\end{lemma}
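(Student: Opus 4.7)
The plan is to argue by contradiction using the top-dimensional mod-$2$ singular homology of a closed manifold. First I would reduce to the connected case: a manifold is locally Euclidean and hence locally connected, so every connected component $C$ of $M$ is both open and closed in $M$. As a closed subspace of the compact space $M$ it is itself compact, and it inherits from $M$ the structure of a $d$-dimensional manifold without boundary. Consequently it suffices to prove that no connected, compact, boundaryless manifold $C$ of dimension $d \geq 1$ is contractible.

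Now I would assume for the sake of contradiction that such a $C$ is contractible. A contractible space is homotopy equivalent to a point, so its reduced singular homology vanishes in every degree; in particular $H_d(C;\mathbb{Z}/2\mathbb{Z}) = 0$. On the other hand, a standard result in algebraic topology (see e.g.\ Hatcher, \emph{Algebraic Topology}, Theorem 3.26) states that any connected, compact, boundaryless $d$-manifold admits a fundamental class with $\mathbb{Z}/2$ coefficients, and hence $H_d(C;\mathbb{Z}/2\mathbb{Z}) \cong \mathbb{Z}/2\mathbb{Z}$. These two facts are incompatible, and the contradiction completes the proof.

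There is essentially no serious obstacle here: the lemma is a direct consequence of well-known facts. The only mild point to attend to is that one should work with mod-$2$ coefficients rather than integer coefficients, so that the argument handles non-orientable manifolds (such as the Klein bottle or the projective plane) on the same footing as orientable ones; with integer coefficients the top homology could vanish in the non-orientable case. Since the hypothesis only requires $d \geq 1$ and makes no orientability assumption, the uniform mod-$2$ argument is the cleanest route.
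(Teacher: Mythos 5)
Your proof is correct and it is essentially the same idea as the paper's: both reduce to the connected case and then show that a closed connected $d$-manifold has nontrivial homology in degree $d$, which is incompatible with contractibility. The only difference is in the choice of coefficients and the precise tool invoked. The paper works with $\mathbb{R}$ coefficients and cites Poincar\'e duality in the form $H_j(U;\mathbb{R}) \cong H_{d-j}(U;\mathbb{R})$; you work with $\mathbb{Z}/2$ coefficients and cite the existence of the mod-$2$ fundamental class (Hatcher, Theorem 3.26). Your choice is actually the more careful one: Poincar\'e duality with $\mathbb{R}$ coefficients as stated in the paper's appendix is valid only for orientable manifolds (for $\mathbb{RP}^2$ one has $H_0(\mathbb{RP}^2;\mathbb{R}) \cong \mathbb{R}$ but $H_2(\mathbb{RP}^2;\mathbb{R}) = 0$), whereas every closed manifold is $\mathbb{Z}/2$-orientable and therefore admits a mod-$2$ fundamental class. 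Since the lemma is stated without any orientability hypothesis, your mod-$2$ argument closes a gap that the paper's written proof leaves open; the gap is harmless for the paper's applications (the manifolds $X_u$ arising as regular level sets in $\mathbb{R}^n$ are automatically orientable, having trivial normal bundle), but as a proof of the lemma in the generality stated, your version is strictly preferable.
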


\begin{lemma} \label{lem:var2} Let $M$ be a compact contractible manifold of dimension $d \geq 2$. Then its boundary $\partial M$ is nonempty (by the previous lemma) and connected.
\end{lemma}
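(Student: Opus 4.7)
The plan is to handle the nonemptiness of $\partial M$ as a quick consequence of the preceding lemma, and then to extract the connectedness of $\partial M$ from Poincar\'e--Lefschetz duality together with the long exact sequence of the pair $(M,\partial M)$.

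For nonemptiness, if $\partial M$ were empty then $M$ would be a closed $d$-manifold with $d\geq 2$; being contractible it is in particular connected, so it would coincide with its unique component. That component would then be contractible, directly contradicting Lemma~\ref{lem:var1}. Hence $\partial M\neq\emptyset$.

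For the connectedness, I would work with $\mathbb{Z}/2$ coefficients so that no orientability hypothesis is needed. Poincar\'e--Lefschetz duality for the compact manifold with boundary $M$ provides the isomorphisms
\[
H_k(M,\partial M;\mathbb{Z}/2)\cong H^{d-k}(M;\mathbb{Z}/2)\quad\text{for all }k.
\]
Since $M$ is contractible, $H^{j}(M;\mathbb{Z}/2)=0$ for every $j\geq 1$, so $H_k(M,\partial M;\mathbb{Z}/2)=0$ whenever $0\leq k\leq d-1$; in particular, as $d\geq 2$, this vanishing holds for $k=0$ and $k=1$. The bottom of the long exact sequence of the pair then reads
\[
0=H_1(M,\partial M;\mathbb{Z}/2)\longrightarrow H_0(\partial M;\mathbb{Z}/2)\longrightarrow H_0(M;\mathbb{Z}/2)\longrightarrow H_0(M,\partial M;\mathbb{Z}/2)=0.
\]
Since $M$ is connected (being contractible), $H_0(M;\mathbb{Z}/2)=\mathbb{Z}/2$, whence $H_0(\partial M;\mathbb{Z}/2)\cong\mathbb{Z}/2$ as well. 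For a manifold this precisely means that $\partial M$ has a single connected component.

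The only delicate point is invoking Poincar\'e--Lefschetz duality in a form that bypasses orientability; the $\mathbb{Z}/2$ version is the cleanest route and avoids a separate argument showing that a simply connected manifold with boundary is orientable. Once the duality is in place the computation is forced, and the hypothesis $d\geq 2$ enters solely to make the crucial group $H_1(M,\partial M;\mathbb{Z}/2)$ fall inside the vanishing range.
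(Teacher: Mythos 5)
Your proof is correct and follows the same route as the paper's: Lefschetz duality for the pair $(M,\partial M)$ followed by the tail end of the long exact sequence of the pair, with the hypothesis $d\geq 2$ entering exactly to kill $H_1(M,\partial M)$. The only divergence is the coefficient ring: the paper runs the argument with $\mathbb{R}$ coefficients and states Lefschetz duality without an orientability hypothesis (which is strictly speaking only valid for orientable manifolds, though the manifolds appearing in the paper's applications are orientable since they are cut out by independent constraints in $\mathbb{R}^n$), whereas your choice of $\mathbb{Z}/2$ coefficients makes the duality step hold for every compact manifold with no orientability caveat --- a modest but real gain in generality and rigor.
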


In proving these lemmas it seems unavoidable to make use of homology with real coefficients, which is a powerful tool from algebraic topology. Since this machinery might not be familiar to the reader, we have postponed the proof of both lemmas to Appendix A.

\subsection{Outline of the argument} We will consider the whole family of social choice problems that arise as $c_m$ runs in the real numbers, thus changing the set of alternatives $X$. To emphasize that $c_m$ now plays the role of a parameter we shall replace it by $u$ and reflect this explicitly in the notation for $X$, letting \[X_u = \{p \in \mathbb{R}^n : g_i(p) = c_i \text{ for $1 \leq i \leq m-1$ and } g_m(p) = u\}.\] This can be equivalently described as \[X_u = \{p \in Y : g_m(p) = u\},\] where $Y$ is the set defined by the first $(m-1)$ constraints as introduced earlier. If we denote by $u_{\rm min}$ and $u_{\rm max}$ the global minimum and maximum values of $g_m$ over $Y$, Theorem \ref{teo:main} amounts to the following statement:
\smallskip

(*) \emph{If $u \in (u_{\rm min},u_{\rm max})$ then there is no social choice function over $X_u$.}
\smallskip

We shall articulate the proof of (*) in two cases according to whether $u$ is a regular or a critical value of $g_m|_Y$. Recall that a point $p \in Y$ is called a \emph{critical point} of the restricted map $g_m|_{Y}$ if the gradient $\nabla g_m(p)$ is a linear combination of $\{\nabla g_1(p),\ldots,\nabla g_{m-1}(p)\}$, and in that case $u = g_m(p)$ is said to be a \emph{critical value} of $g_m|_Y$. A value $u$ that is not critical is called \emph{regular}.

Consider first the case when $u$ is a regular value of $g_m|_Y$. This amounts to saying that the gradients of $g_1, \ldots, g_m$ are all linearly independent at each $p \in X_u$; that is, the constraint qualifications are satisfied. This is precisely the situation considered in Proposition \ref{prop:production}. Geometrically, it implies that (if nonempty) $X_u$ is a differentiable manifold of dimension $d = n - m \geq 1$ without boundary (see for instance \cite[Theorem 2.3, p. 213]{delafuente1}). Now, any differentiable manifold $M$ (with or without boundary) is indeed a parafinite CW complex because it can be triangulated, as shown by Whitehead \cite{whitehead1940} or Whitney \cite[Theorem 12A, p. 124]{whitney1957}. Thus the theorem of Chichilnisky and Heal directly reduces the problem to deciding whether the components of $X_u$ are contractible. The assumption that $X_u$ is bounded, together with the fact that $X_u$ is closed in $\mathbb{R}^n$, entails that $X_u$ is compact. Hence we can apply Lemma \ref{lem:var1} to $M = X_u$ to learn that none of the components of $X_u$ is contractible and conclude that the social choice problem over $X_u$ has no solution. This proves Proposition \ref{prop:production}.

The above establishes (*) when $u$ is a regular value of $g_m|_Y$, so we are left to consider the case when $u$ is a critical value of $g_m|_Y$; that is, we need to prove
\smallskip

(**) \emph{If $u \in (u_{\rm min},u_{\rm max})$ is a critical value of $g_m|_Y$, then there is no social choice function over $X_u$.}
\smallskip

This requires some hard work. The reason is that when $u$ is a critical value of $g_m|_Y$ there is no guarantee that $X_u$ is a manifold or even a CW complex anymore, and so we are not entitled to apply neither the theorem of Chichilnisky and Heal nor Lemma \ref{lem:var1}, as we did before, to reach a conclusion. To illustrate this point one may consider the situation depicted in Figure \ref{fig:explanation}.(a), where $X \subseteq \mathbb{R}^3$ is defined by two constraints. The first constraint alone defines a surface $Y$. Let the other constraint be given by the map $g_2(x,y,z) = z$. The $Z$ axis is represented vertically, so the various sets $X_u$ are simply the intersections of $Y$ with horizontal planes at height $u$. The critical points of $g_2|_Y$ are $p$ and $q$ (and possibly others not shown in the picture) and $u$ is a critical value of $g_2|_Y$ because $X_u$ contains the critical point $p$. Observe that $X_u$ is an ``eight-figure'' (two circumferences having a single point in common), which is not a manifold as mentioned earlier.\footnote{In this simple drawing $X_u$ is still a CW--complex, but in more complicated situations this need not be the case.}

\begin{figure}[h!]
\null\hfill
\subfigure[]{
\begin{pspicture}(0,0)(8,4.5)
\rput[bl](0.5,0){\scalebox{0.5}{\includegraphics{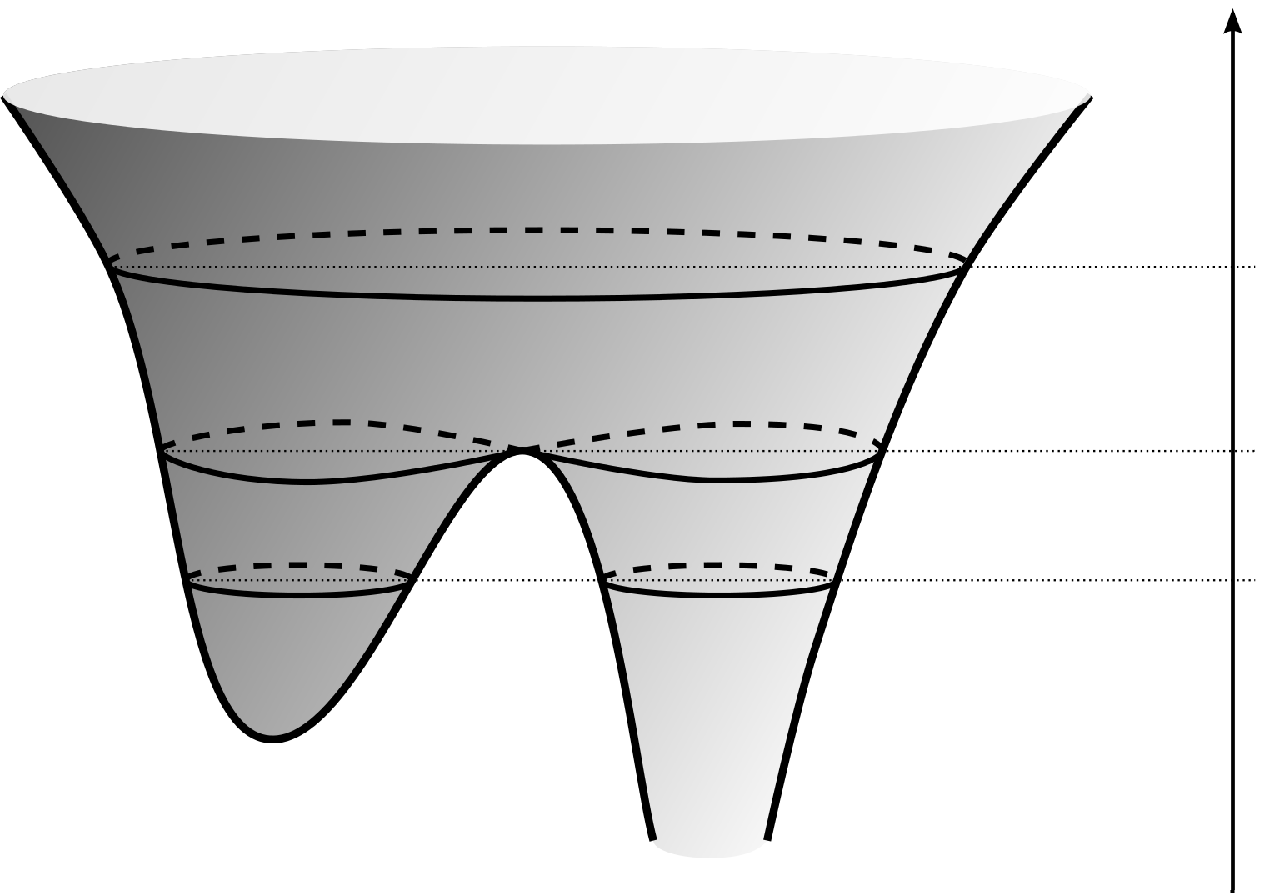}}}
\rput(2,4.2){$Y$} \rput(3.15,2){$p$} \rput(1.85,0.6){$q$}
\rput(7.2,1.6){$u_1$} \rput(7.2,2.3){$u$} \rput(7.2,3.2){$u_2$}
\end{pspicture}}
\hfill
\subfigure[]{
\begin{pspicture}(-0.6,0)(6.8,4.5)
\rput[bl](0.5,0){\scalebox{0.5}{\includegraphics{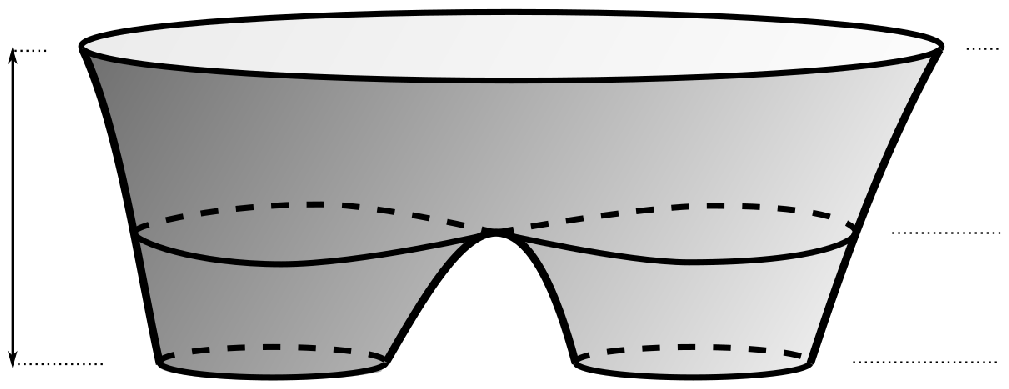}}}
\rput[l](5.8,1.4){$X_{u_1}$} \rput[l](5.8,2.1){$X_u$} \rput[l](5.8,3){$X_{u_2}$}
\rput(0,2.2){$X_{[u_1,u_2]}$}
\end{pspicture}}
\hfill\null
\caption{\label{fig:explanation}}
\end{figure}

To establish (**) we will argue by contradiction. Suppose that there does exist a social choice function over $X_u$ where, we recall, $u_{\rm min} < u < u_{\rm max}$ and $u$ is a critical value of $g_m|_Y$. Pick two numbers $u_1$ and $u_2$ such that $u_{\rm min} < u_1 < u < u_2 < u_{\rm max}$ and consider the auxiliary set \[X_{[u_1,u_2]} = \{p \in Y : u_1 \leq g_m(p) \leq u_2\}.\] One can always choose $u_1$ and $u_2$ to be regular values of $g_m|_Y$, and this guarantees that the set $M = X_{[u_1,u_2]}$ is a compact manifold whose boundary $\partial M$ is the disjoint union of $X_{u_1}$ and $X_{u_2}$, where as usual \[X_{u_1} = \{p \in Y : g_m(p) = u_1\} \quad \text{and} \quad X_{u_2} = \{p \in Y : g_m(p) = u_2\}.\] In our illustrative example of Figure \ref{fig:explanation} the set $X_{[u_1,u_2]}$ is the region of $Y$ comprised between heights $u_1$ and $u_2$, as shown in panel (b), and $X_{u_1}$ and $X_{u_2}$ are the bottom and top ``rims'' of $X_{[u_1,u_2]}$, respectively. We will prove that:
\begin{itemize}
	\item[(i)] \label{extend} With an appropriate choice of $u_1$ and $u_2$ the social choice function that exists over $X_u$ by assumption can ``almost'' be extended to another one defined on all $X_{[u_1,u_2]}$.
  \item[(ii)] As a consequence of (i) and the theorem of Chichilnisky and Heal applied to the ma{\-}ni{\-}fold $M = X_{[u_1,u_2]}$, it follows that the latter must be contractible and therefore its boundary $\partial M$ must be connected by Lemma \ref{lem:var2}. However, since $\partial M$ is the disjoint union of $X_{u_1}$ and $X_{u_2}$, at least one of them must be empty.
\end{itemize}

We reach a contradiction from (ii) as follows. Since $Y$ is compact and connected, $g_m(Y) \subseteq \mathbb{R}$ is also compact and connected, so it is a compact interval. Therefore $g_m(Y) = [u_{\rm min},u_{\rm max}]$. In particular both $u_1$ and $u_2$ belong to $g_m(Y)$, and this implies that both $X_{u_1}$ and $X_{u_2}$ are nonempty.  This contradicts (ii), showing that the assumption that there is a social choice function over $X_u$ is untenable and completing the proof of Theorem \ref{teo:main}.

Both (i) and (ii) are, in fact, more delicate than they seem. First, it may not always be possible to extend a social choice function originally defined only on $X_u$ to a social choice function defined on the larger set $X_{[u_1,u_2]}$. We shall see, however, that it is always possible to extend it at the homotopy level (hence the use of the word ``almost'' in step (i) above). This will prompt the introduction of \emph{homotopy social choice functions} below, a notion which is slightly weaker than that of a true social choice function but still good enough for our purposes. Secondly, in step (ii) the application of the theorem of Chichilnisky and Heal only shows that each component of $X_{[u_1,u_2]}$ must be contractible, but maybe not $X_{[u_1,u_2]}$ as a whole. Thus we can only conclude that each component of $X_{[u_1,u_2]}$ has a connected boundary, which does not directly lead to a contradiction and therefore will require a more detailed analysis of the situation.

As mentioned in the statement of Theorem \ref{teo:main}, we will need to assume that $g_m|_Y$ has at most finitely many critical values (notice that since many critical points may correspond to the same critical value, $g_m|_Y$ may well have infinitely many critical points in spite of having only finitely many critical values as required).
\smallskip

{\it Note.} For the sake of brevity, from now on we shall sometimes abbreviate ``social choice function'' as SCF.


\subsection{Homotopy social choice functions} \label{subsec:hscf} Let us recast the conditions of unanimity and anonimity in a slightly different but well known equivalent way. Suppose $F$ is an SCF for $k$ agents on a space of alternatives $X$. Denote by the letter $\Delta$ the diagonal map \[\Delta : X \longrightarrow X^k \ \ ; \ \ \Delta(p) = (p,p,\ldots,p)\] and by the letter $P$ any permutation map $P : X^k \longrightarrow X^k$. The conditions of unanimity (U) and anonimity (A) on $F$ can then be equivalently stated as
\begin{enumerate}
        \item[(U)] $F \circ \Delta = {\rm id}$,
        \item[(A)] $F \circ P = F$ for any permutation map $P$.
\end{enumerate}

As it turns out, the proof of the theorem of Chichilnisky and Heal works equally well if (U) and (A) only hold at the homotopy level; that is, if they are replaced by
\begin{enumerate}
        \item[(HU)] $F \circ \Delta \simeq {\rm id}$,
        \item[(HA)] $F \circ P \simeq F$ for any permutation map $P$,
\end{enumerate}
where $\simeq$ denotes the homotopy relation between maps. The reason is that, when one considers the maps $F^*$, $\Delta^*$ and $P^*$ induced by $F$, $\Delta$ and $P$ between homotopy groups, unanimity (U) and its homotopical counterpart (HU) yield the same relation $F^* \circ \Delta^* = {\rm id}$, and the same goes for (A) and (HA) (namely, $F^{*} \circ P^* = F^*$). Since it is only these relations that are needed to conclude that the components of $X$ are contractible, our assertion follows.

For the sake of brevity let us call a continuous map $F : X^k \longrightarrow X$ a \emph{homotopy SCF} (for $k$ agents) if it satisfies conditions (HU) and (HA) above. These functions are not to be interpreted in any intuitive sense, but just as mathematical objects that will be useful to prove Theorem \ref{teo:main}. Our discussion may be summed up in the following

\begin{lemma} \label{lem:chichil} Let $M$ be a compact differentiable manifold. Assume there exist homotopy social choice functions over $M$ for any number of agents. Then each component of $M$ is contractible.
\end{lemma}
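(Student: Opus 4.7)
\bigskip

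\noindent\emph{Proof plan.} My plan is to invoke the homotopical version of the Chichilnisky--Heal argument already sketched in the paragraph preceding the lemma, after a short reduction to the case of a connected manifold.

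First I would reduce to the case $M$ connected. Pick a component $M_0 \subseteq M$: restricting the homotopy provided by (HU) to $M_0 \times [0,1]$ gives a map with continuous image, one of whose endpoints is the inclusion $M_0 \hookrightarrow M$; since $M_0 \times [0,1]$ is connected and its image must lie in a single component of $M$, that image is entirely contained in $M_0$. In particular $F(\Delta(M_0)) \subseteq M_0$. Because $M_0^k$ is connected and contains $\Delta(M_0)$, continuity of $F$ forces $F(M_0^k) \subseteq M_0$. The same connectedness trick applied to the homotopies furnished by (HA) and (HU) confines those to $M_0$, showing that $F|_{M_0^k}$ is a homotopy SCF over $M_0$. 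Thus it suffices to prove the lemma for connected $M$.

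Next, with $M$ connected and compact, I would analyse the induced group homomorphism $F^* : \pi_n(M^k) \cong \pi_n(M)^k \to \pi_n(M)$ for each $n \geq 1$. Conditions (HU) and (HA) translate respectively into $F^* \circ \Delta^* = {\rm id}$ and $F^* \circ P^* = F^*$. Setting $\phi(\alpha) := F^*(\alpha,0,\ldots,0)$, the homomorphism property combined with (HA) forces $F^*(\alpha_1,\ldots,\alpha_k) = \phi(\alpha_1)\,\phi(\alpha_2)\cdots\phi(\alpha_k)$ with pairwise commuting factors, and (HU) then yields $\phi(\alpha)^k = \alpha$ for every $\alpha$. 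A short algebraic manipulation based on these identities shows that $\pi_n(M)$ is abelian and $k$-divisible for every $k \geq 1$ (one such $\phi$ being available for each $k$ by hypothesis). Since $M$ is a compact manifold, it has the homotopy type of a finite CW complex, so $\pi_1(M)$ is finitely generated, and a finitely generated divisible abelian group is trivial. Once $\pi_1(M) = 0$, Serre's theorem on simply connected finite CW complexes ensures that each $\pi_n(M)$, $n \geq 2$, is also finitely generated, so the same divisibility argument forces $\pi_n(M) = 0$ for all $n$. Whitehead's theorem finally upgrades the weak contractibility to contractibility, since $M$ is a CW complex.

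The delicate point I anticipate is handling the higher homotopy groups: for $\pi_1$ finite generation is immediate from standard facts about compact manifolds, but for $n \geq 2$ one must invoke Serre's finiteness theorem before the divisibility obstruction has any bite. An alternative that avoids Serre would be to transfer the entire argument to rational cohomology, where finite-dimensionality of $H^n(M;\mathbb{Q})$ is automatic and the Künneth formula combined with the cup product relation encoded by $\Delta^* \circ F^* = {\rm id}$ yields $H^n(M;\mathbb{Q}) = 0$ for all $n \geq 1$ directly. Bookkeeping with base points throughout the homotopy-group analysis is routine but deserves care.
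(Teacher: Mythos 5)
Your proof is correct and follows the same route as the paper: both pass to the induced maps on homotopy groups and observe that (HU)/(HA) impose the relations $F^* \circ \Delta^* = \mathrm{id}$ and $F^* \circ P^* = F^*$, after which the Chichilnisky--Heal divisibility argument shows the homotopy groups are trivial. The paper simply cites that argument, stating that ``the proof of the theorem of Chichilnisky and Heal works equally well''; you carry it out in full, including the reduction to a connected $M$, the finite generation via Serre, and the passage from weak contractibility to contractibility via Whitehead, all of which the paper leaves implicit.
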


\subsection{An extension result} \label{subsec:extend} Now we are going to establish the extension result mentioned in (i) in page \pageref{extend}; namely, that an SCF on $X_u$ can be extended to a \emph{homotopy} SCF on $X_{[u_1,u_2]}$.

Let $u$ be a critical value of $g_m|_Y$ and suppose that it is not a global optimum, so that $u_{\rm min} < u < u_{\rm max}$. As a consequence of the technical assumption that $g_m|_Y$ only has finitely many critical values we may choose $u_1,u_2 \in [u_{\rm min},u_{\rm max}]$ such that $u_1 < u < u_2$ and $u$ is the only critical value of $g_m|_Y$ on the interval $[u_1,u_2]$.

\begin{proposition} \label{prop:extend} Let $u_1$ and $u_2$ be chosen as above. If there exists a social choice function $F$ for $k$ agents on $X_u$, then there exists a homotopy social choice function $F'$ for $k$ agents on $X_{[u_1,u_2]}$.
\end{proposition}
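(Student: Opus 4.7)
My strategy is to reduce the extension problem to the existence of a deformation retraction of $X_{[u_1,u_2]}$ onto $X_u$. Specifically, suppose one can produce a continuous retraction $r\colon X_{[u_1,u_2]}\to X_u$ together with a homotopy $H\colon X_{[u_1,u_2]}\times[0,1]\to X_{[u_1,u_2]}$ from $\mathrm{id}_{X_{[u_1,u_2]}}$ to $i\circ r$, where $i\colon X_u\hookrightarrow X_{[u_1,u_2]}$ denotes the inclusion. Then I would set
\[
F'(p_1,\dots,p_k)\;:=\;i\bigl(F(r(p_1),\dots,r(p_k))\bigr),
\]
which is continuous by composition, takes values in $X_{[u_1,u_2]}$, and is defined on all of $X_{[u_1,u_2]}^k$. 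Axiom (HU) holds because, by unanimity of $F$, $F'(p,\dots,p)=i(r(p))$, and $i\circ r\simeq\mathrm{id}$ via $H$; axiom (HA) in fact holds on the nose, since coordinate permutations commute with the coordinatewise application of $r$ and $F$ is anonymous on $X_u$.

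All the mathematical content of the proposition therefore lies in producing the deformation retraction. My plan is to use the (negative) gradient flow of $g_m|_Y$ computed with respect to any auxiliary Riemannian metric on $Y$: since $u$ is the only critical value of $g_m|_Y$ in $[u_1,u_2]$, this flow is nonsingular on $X_{[u_1,u_2]}\setminus X_u$. Reparameterizing so that $g_m$ varies at unit speed along trajectories, each point $p\in g_m^{-1}((u,u_2])\cap Y$ flows downward either until it meets $X_u$ at a regular point, after parameter time exactly $g_m(p)-u$, or asymptotically to a critical point $p^{*}$ of $g_m|_Y$ on $X_u$; I set $r(p)$ to be the endpoint in the first case and $p^{*}$ in the second. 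A symmetric construction flowing upward handles $g_m^{-1}([u_1,u))\cap Y$, and I put $r|_{X_u}=\mathrm{id}$. The homotopy $H(p,s)$ is then obtained by running the corresponding flow only a fraction $s$ of its prescribed length; it is manifestly the identity at $s=0$, equals $i\circ r$ at $s=1$, and is stationary on $X_u$.

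The principal obstacle is verifying the continuity of $r$ and $H$ at the points of $X_u$, and most delicately at the critical points of $g_m|_Y$ lying on $X_u$. These critical points need not be non-degenerate, so no local Morse normal form is available; instead one must argue that whenever $p$ approaches a critical point $p^{*}\in X_u$ from either side, its trajectory either accumulates at $p^{*}$ (the stable-manifold case) or crosses $X_u$ arbitrarily close to $p^{*}$, so that in either event $r(p)$ stays close to $r(p^{*})=p^{*}$. I expect this to be established by a compactness argument inside a small neighborhood of the critical set of $g_m|_Y$ at level $u$, then glued to the easy regular part in the spirit of the standard deformation arguments of Morse theory, and I anticipate relegating this technical step to an appendix. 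The hypothesis that $g_m|_Y$ has only finitely many critical values is crucial here, since it is precisely what allows the choice of $u_1<u<u_2$ isolating the level $u$ from any other critical level.
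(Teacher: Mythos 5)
Your proposed reduction to a deformation retraction $r\colon X_{[u_1,u_2]}\to X_u$ is cleaner in spirit than what the paper does, and if such a retraction existed your construction of $F'$ would indeed verify (HU) and (HA). But the deformation retraction is precisely the point at which the argument breaks down, and the paper's proof is organized specifically to sidestep it. The problem is the one you yourself flag and then defer: when $u$ is a degenerate critical value, the gradient flow of $g_m|_Y$ need not converge to a single point as $t\to+\infty$ (the $\omega$-limit set can be a nontrivial connected subset of the critical set), and even when each trajectory does converge, the limit point generally fails to vary continuously with the initial condition. No compactness argument of the sort you sketch will repair this; there are standard examples (e.g.\ a ``monkey saddle'' or a flat critical circle) where the map ``initial point $\mapsto$ forward limit'' is discontinuous on every neighbourhood of the critical level. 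Since nondegeneracy is exactly the hypothesis you cannot invoke here, the retraction onto $X_u$ cannot be taken for granted, and the proposal has no fallback.

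The paper avoids this obstacle by never asking the flow to reach $X_u$. It proves two weaker statements and combines them. First (Lemma~\ref{lem:extend1}), the given SCF $F$ on $X_u$ is extended, not to all of $X_{[u_1,u_2]}$, but to a unanimous and anonymous map $F_U\colon U^k\to X_{[u_1,u_2]}$ on some neighbourhood $U$ of $X_u$; this uses the fact that the target $X_{[u_1,u_2]}$ is a manifold, hence an ANR, after passing to the symmetric quotient $M^k/\!\sim$ to get anonymity for free. Second (Lemma~\ref{lem:morse}), a continuous $r\colon X_{[u_1,u_2]}\to X_{[u_1,u_2]}$ with image in $U$ and $r\simeq\mathrm{id}$ is built by running the (damped) gradient-like flow for a fixed \emph{finite} time $T$: finiteness makes continuity of $r$ automatic, and compactness plus the attractor property of $X_u$ (Proposition~\ref{prop:flujo}) supply a uniform $T$ that works for all initial points. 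Then $F'=F_U\circ(r\times\cdots\times r)$ is a homotopy SCF, with (HU) holding only up to the homotopy $r\simeq\mathrm{id}$ --- which is exactly why the notion of \emph{homotopy} SCF is introduced. In short, your plan collapses the two lemmas into one by demanding that $r$ land exactly in $X_u$; that stronger demand is what cannot be met at degenerate critical points, and replacing ``lands in $X_u$'' by ``lands in a neighbourhood of $X_u$ where $F$ has already been extended'' is the key idea you are missing.
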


The proof of the proposition needs Lemmas \ref{lem:extend1} and \ref{lem:morse}, which we state now. Their proofs are postponed to Appendix B, since they are slightly technical.

\begin{lemma} \label{lem:extend1} There exist a neighbourhood $U$ of $X_u$ in $X_{[u_1,u_2]}$ and a continuous function $F_U : U^k \longrightarrow X_{[u_1,u_2]}$ with the properties
\begin{enumerate}
        \item $F_U(p,\ldots,p) = p$,
        \item $F_U(p_1,\ldots,p_k)$ is independent of the ordering of the $p_i$.
\end{enumerate}
\end{lemma}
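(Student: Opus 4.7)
The plan is to extend $F$ from $X_u^k$ to a neighborhood in $X_{[u_1,u_2]}^k$ simultaneously with the identity prescribed along the full diagonal, using the standard neighborhood extension theorem for ANRs, and then to cut down to a product neighborhood of the form $U^k$. The main ingredient is that $X_{[u_1,u_2]}$ is a compact metric absolute neighborhood retract (ANR). This is plausible because $u_1$ and $u_2$ are regular values of $g_m|_Y$, so $X_{[u_1,u_2]}$ is a manifold with boundary away from the critical points of $g_m|_Y$ lying on the level $X_u$; near those critical points $X_{[u_1,u_2]}$ is cut out by finitely many smooth (in)equalities, and a local analysis gives local contractibility, which together with finite dimension yields the ANR property by Borsuk's criterion.

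Granting this, set $\Delta = \{(p,\ldots,p) : p \in X_{[u_1,u_2]}\}$ and $A = X_u^k \cup \Delta$, a closed subset of $X_{[u_1,u_2]}^k$ which is invariant under the symmetric group $S_k$ acting by coordinate permutations. I would define $F_A : A \to X_{[u_1,u_2]}$ by $F$ on $X_u^k$ and by $(p,\ldots,p) \mapsto p$ on $\Delta$; the two rules agree on the overlap $X_u^k \cap \Delta = \Delta(X_u)$ precisely because of the unanimity of $F$, so by the gluing lemma $F_A$ is well defined and continuous. Anonymity of $F$ makes $F_A$ an $S_k$-invariant map, so it descends to a continuous map $\bar F_A : A/S_k \to X_{[u_1,u_2]}$ defined on a closed subset of the compact metric quotient $X_{[u_1,u_2]}^k/S_k$.

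Now I would apply the extension theorem: since the target is an ANR, $\bar F_A$ extends continuously to an open neighborhood of $A/S_k$ in the quotient, and pulling back yields an $S_k$-invariant continuous extension $\hat F : W \to X_{[u_1,u_2]}$ on an $S_k$-invariant open neighborhood $W$ of $A$ in $X_{[u_1,u_2]}^k$. Because $X_u^k$ is compact and sits inside the open set $W$, a standard tube-lemma argument produces an open set $U$ with $X_u \subseteq U \subseteq X_{[u_1,u_2]}$ and $U^k \subseteq W$; setting $F_U := \hat F|_{U^k}$ gives a continuous map which is anonymous by construction and unanimous because $\hat F$ restricts to the identity on all of $\Delta$, by the definition of $F_A$.

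The hard part will be the ANR claim at the critical points of $g_m|_Y$ lying in $X_u$: there $X_{[u_1,u_2]}$ need not be a topological manifold, and one must analyse its local structure explicitly enough to exhibit contractible local neighborhoods. Everything else is an equivariant application of a classical extension theorem together with a compactness argument.
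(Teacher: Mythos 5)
Your approach is essentially the same as the paper's: glue $F$ with the identity on the full diagonal $\Delta$, pass to the quotient by the symmetric group, apply the neighbourhood extension theorem for maps into an ANR, pull back, and cut down to a cube $U^k$ by a tube-lemma argument. The paper proves a slightly more general lemma (a compact manifold $M$ with closed subset $Z$ carrying an SCF) and then specializes, but the mechanism is identical.

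The one substantive issue is that you have mis-diagnosed a difficulty where none exists. You flag the ANR property of $X_{[u_1,u_2]}$ near the critical points of $g_m|_Y$ lying in $X_u$ as ``the hard part,'' on the ground that $X_{[u_1,u_2]}$ ``need not be a topological manifold'' there. In fact $X_{[u_1,u_2]}$ is a genuine compact smooth manifold with boundary, with no exceptional points. The reason is that $u_1$ and $u_2$ are chosen to be \emph{regular} values of $g_m|_Y$: the preimage of a closed interval with regular endpoints under a smooth map on a closed manifold $Y$ is a compact manifold-with-boundary, regardless of what critical points of $g_m|_Y$ lie in the interior of that interval. Concretely, a critical point $p^*$ with $g_m(p^*) = u$ satisfies $u_1 < g_m(p^*) < u_2$, so it lies in the open set $\{p \in Y : u_1 < g_m(p) < u_2\}$, which is an open subset of the manifold $Y$ contained in $X_{[u_1,u_2]}$; locally around $p^*$, $X_{[u_1,u_2]}$ coincides with $Y$ itself and is manifestly a manifold. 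The constraints $u_1 \le g_m \le u_2$ are only active on the boundary strata $X_{u_1}$ and $X_{u_2}$, where regularity of $u_1,u_2$ gives the usual collar structure. What genuinely fails to be a manifold is the single level set $X_u$, but that plays the role of $Z$, not of $M$, in the extension argument. So no local analysis near critical points is needed; the extension theorem applies directly because the target $X_{[u_1,u_2]}$ is a compact manifold, hence an ANR, and your plan then goes through exactly as the paper's does.
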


Notice that $F_U$ is close to being an SCF on $U$ (it is certainly unanimous and anonymous), but it does not qualify as such because its target space is $X_{[u_1,u_2]}$ rather than $U$. To remedy this we shall make use of the following lemma:

\begin{lemma} \label{lem:morse} Given any neighbourhood $U$ of $X_u$ in $X_{[u_1,u_2]}$ there exists a continuous mapping $r : X_{[u_1,u_2]} \longrightarrow X_{[u_1,u_2]}$ such that:
\begin{enumerate}
        \item[(1)] $r(p) \in U$ for every $p \in X_{[u_1,u_2]}$,
        \item[(2)] $r$ is homotopic to the identity in $X_{[u_1,u_2]}$.
\end{enumerate}
\end{lemma}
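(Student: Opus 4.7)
The plan is to build $r$ as the time-one map of a deformation retraction of $X_{[u_1,u_2]}$ onto a thin slab $X_{[u-\epsilon,u+\epsilon]}$ contained in $U$, obtained by following a rescaled gradient flow of $g_m|_Y$ on the two ``collars'' $X_{[u_1,u-\epsilon]}$ and $X_{[u+\epsilon,u_2]}$, which by hypothesis are free of critical points of $g_m|_Y$.

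First I would fix $\epsilon>0$ small enough that $X_{[u-\epsilon,u+\epsilon]}\subseteq U$ and such that $u\pm\epsilon$ are regular values of $g_m|_Y$. The first condition is a routine sequential compactness argument: if no such $\epsilon$ existed, picking $p_n\in X_{[u-1/n,u+1/n]}\setminus U$ and extracting a subsequential limit $p$ in the compact set $Y$ would yield $g_m(p)=u$, whence $p\in X_u\subseteq U$, contradicting the openness of $U$; the second condition is possible because $g_m|_Y$ has only finitely many critical values. With $\epsilon$ so chosen, the intervals $[u_1,u-\epsilon]$ and $[u+\epsilon,u_2]$ contain no critical values at all, since by the choice of $u_1,u_2$ the only candidate was $u$ itself.

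Next I would endow $Y$ with the Riemannian metric inherited from $\mathbb{R}^n$, denote by $\xi$ the gradient of $g_m|_Y$ on $Y$, and consider the rescaled vector fields $\pm\xi/\|\xi\|^2$ on the respective collars. These are well-defined smooth fields, because $\|\xi\|$ is bounded below by a positive constant on each compact collar by absence of critical points, and they satisfy $dg_m(\pm\xi/\|\xi\|^2)=\pm 1$, so that $g_m$ varies at unit speed along their flow lines $\phi_t$. The formula
\[
H^{-}_{s}(p)=\phi_{s(u-\epsilon-g_m(p))}(p)
\]
then defines a strong deformation retraction $H^{-}$ of $X_{[u_1,u-\epsilon]}$ onto $X_{u-\epsilon}$, and a symmetric construction with $-\xi/\|\xi\|^2$ produces a strong deformation retraction $H^{+}$ of $X_{[u+\epsilon,u_2]}$ onto $X_{u+\epsilon}$. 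Gluing $H^{-}$, the constant homotopy at the identity on the central slab $X_{[u-\epsilon,u+\epsilon]}$, and $H^{+}$ along the common boundary levels yields a continuous homotopy $H\colon X_{[u_1,u_2]}\times[0,1]\to X_{[u_1,u_2]}$ with $H_0=\mathrm{id}$ and $H_1(X_{[u_1,u_2]})\subseteq X_{[u-\epsilon,u+\epsilon]}\subseteq U$; the map $r:=H_1$ then satisfies both conclusions of the lemma.

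The delicate point I expect to have to justify carefully is the continuity of $H$ across the two gluing levels $X_{u\pm\epsilon}$: at $X_{u-\epsilon}$ the parameter $s(u-\epsilon-g_m(p))$ vanishes identically, so $H^{-}_s$ collapses to the identity and matches the central constant homotopy seamlessly, and symmetrically at $u+\epsilon$. Beyond this the argument is standard Morse-theoretic material (essentially the ``second deformation lemma'' applied below and above the critical level $u$); completeness of the rescaled flow on each collar is automatic since every trajectory is confined to a compact set free of critical points of $g_m|_Y$, where no blow-up of the integral curves can occur.
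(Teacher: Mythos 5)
Your proof is correct, and it takes a genuinely different route from the paper's. The paper defines a single smooth vector field $W(p)=(u-g_m(p))\,V(p)$ on all of $Y$, where $V$ is the tangential projection of $\nabla g_m$; the modulating factor $(u-g_m(p))$ makes $W$ vanish on $X_u$, so the flow $\varphi$ is globally defined and $X_u$ becomes an \emph{asymptotic} attractor. The paper then proves (via a string of assertions culminating in what is labelled Proposition~\ref{prop:flujo}) that for any neighbourhood $U$ there is a uniform $T>0$ with $\varphi(q,t)\in U$ for all $q\in X_{[u_1,u_2]}$ and $t\ge T$, and sets $r(p):=\varphi(p,T)$. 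You instead choose a slab $X_{[u-\epsilon,u+\epsilon]}\subseteq U$, rescale the gradient of $g_m|_Y$ by $1/\|\xi\|^2$ on the two critical-point-free collars (where this is legitimate precisely because the only critical value in $[u_1,u_2]$ is $u$), and produce a \emph{finite-time} strong deformation retraction onto the slab, glued with the constant homotopy across $X_{u\pm\epsilon}$. What each approach buys: the paper avoids any choice of $\epsilon$ and any gluing argument, working with one globally smooth flow at the cost of a delicate asymptotic argument (their Assertion~6) to get the uniform landing time $T$; your version is more explicit and yields the stronger statement that $r$ is the time-one map of a strong deformation retraction, at the cost of the rescaled field being singular at critical points, which you correctly sidestep by restricting to the collars. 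Your continuity-of-gluing and compactness arguments are sound (one small nit: ``neighbourhood'' of $X_u$ need not be open, but the argument goes through by replacing $U$ with an open set it contains).
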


Now we put these two results together to prove Proposition \ref{prop:extend}.

\begin{proof}[Proof of Proposition \ref{prop:extend}] Apply Lemma \ref{lem:extend1} to find $U$ and $F_U$; then apply Lemma \ref{lem:morse} to the $U$ just obtained to get $r$. Define $F' : X_{[u_1,u_2]}^k \longrightarrow X_{[u_1,u_2]}$ by \[F'(p_1,\ldots,p_k) := F_U(r(p_1),\ldots,r(p_k)).\] Notice that the definition is correct: all the $r(p_i)$ belong to $U$ and therefore it makes sense to evaluate $F_U$ on the $k$--tuple $(r(p_1),\ldots,r(p_k))$.

We claim that $F'$ is a homotopy SCF on $X_{[u_1,u_2]}$. Clearly $F'$ is insensitive to the ordering of its arguments because the same is true of $F_U$, so (HA) holds. Also, composing $F'$ with the diagonal map $\Delta(p) = (p,\ldots,p)$ yields \[F' \circ \Delta(p) = F_U(r(p),\ldots,r(p)) = r(p),\] and since $r \simeq {\rm id}$ in $X_{[u_1,u_2]}$, we see that \[F' \circ \Delta \simeq {\rm id}.\] This establishes property (HU) and shows that $F'$ is indeed a homotopy SCF.
\end{proof}

\subsection{The proof of Theorem \ref{teo:main}} \label{subsec:proof} We only need to prove (**). To argue by contradiction assume that there exists an SCF over $X_u$ where $u_{\rm min} < u < u_{\rm max}$ is a critical value of $g_m|_Y$. Choose $u_1$ and $u_2$ satisfying $u_{\rm min} < u_1 < u < u_2 < u_{\rm max}$ and such that $u$ is the only critical value of $g_m|_Y$ in the interval $[u_1,u_2]$ (this is possible because of the technical assumption that $g_m|_Y$ only has finitely many critical values). The set $X_{[u_1,u_2]}$ is then a manifold because both $u_1$ and $u_2$ are regular values of $g_m|_Y$. It is clearly compact and its dimension is the same as that of $Y$, which is $d = n - (m-1) \geq 2$ because of the condition $m < n$. The boundary of $X_{[u_1,u_2]}$ is the disjoint union of $X_{u_1}$ and $X_{u_2}$. We proved earlier, using the connectedness and compactness of $Y$, that both $X_{u_1}$ and $X_{u_2}$ are nonempty.

By Proposition \ref{prop:extend} the SCF that exists over $X_u$ by assumption can be extended to a homotopy SCF on $X_{[u_1,u_2]}$. As a consequence of Lemma \ref{lem:chichil} it follows that each component $C$ of $X_{[u_1,u_2]}$ is contractible, and so its boundary $\partial C$ is connected by Lemma \ref{lem:var2}. In particular, for each $C$ we have that $\partial C$ must be \emph{entirely} contained in either $X_{u_1}$ or $X_{u_2}$. Thus, letting \[P := \cup \{C : C \text{ connected component of } X_{[u_1,u_2]} \text{ such that } \partial C \subseteq X_{u_1}\}\] and \[Q := \cup \{C : C \text{ connected component of } X_{[u_1,u_2]} \text{ such that } \partial C \subseteq X_{u_2}\}\] we have that $X_{[u_1,u_2]}$ is the disjoint union of $P$ and $Q$. Notice that both $P$ and $Q$ are nonempty because both $X_{u_1}$ and $X_{u_2}$ are nonempty too.

Pick points $p \in P$ and $q \in Q$. Since $Y$ is a connected (by assumption) manifold, it is also path connected. Thus there exists a continuous path $\gamma : [0,1] \longrightarrow Y$ such that $\gamma(0) = p$ and $\gamma(1) = q$. Since $\gamma$ starts in $P$ and ends in $Q$ and $P$ and $Q$ are disjoint, there exist a number $t_P$ when $\gamma$ last belongs to $P$ and a number $t_Q > t_P$ when $\gamma$ first hits $Q$. Formally these are \[t_P := \sup\ \{ t \in [0,1] : \gamma(t) \in P\} \quad \text{and} \quad t_Q := \inf\ \{ t \in [t_P,1] : \gamma(t) \in Q\}.\] Observe the following:
\begin{itemize}
	\item[(a)] By definition, $\gamma(t) \not\in X_{[u_1,u_2]}$ for every $t \in (t_P,t_Q)$.
	\item[(b)] By continuity $\gamma$ must exit $P$ through its boundary and similarly enter $Q$ through its boundary, so $\gamma(t_P) \in \partial P$ and $\gamma(t_Q) \in \partial Q$.
\end{itemize}

The definition of $P$ and $Q$ together with (b) above imply that $g_m(\gamma(t_P)) = u_1$ and $g_m(\gamma(t_Q)) = u_2$. Consider the set $g_m(\gamma([t_P,t_Q]))$. It is a connected, compact subset of $\mathbb{R}$, so it must be a compact interval. Since it contains $g_m(\gamma(t_P)) = u_1$ and $g_m(\gamma(t_Q)) = u_2$, it must also contain $u$. Thus there exists $t_* \in [t_P,t_Q]$ such that $g_m(\gamma(t_*)) = u$, which means that $\gamma(t_*)$ belongs to $X_u$ and therefore also to $X_{[u_1,u_2]}$. This contradicts (a) above and finishes the proof of Theorem \ref{teo:main}.

\section{The sufficiency of optimal design (Theorem \ref{teo:iff})} \label{sec:sufficient}

We begin by presenting a trivial example that illustrates how the optimality condition of Theorem \ref{teo:main} is generally not sufficient to avoid a social choice paradox:

\begin{example} \label{ex:torus} Consider the set of alternatives \[X = \{(x,y,z) \in \mathbb{R}^3 : (x^2+y^2+z^2+3)^2 - 16(x^2+y^2) = 0 \ , \ z = 1\}.\]

It is easy to check that the second constraint is optimal with respect to the first one, so the necessary condition provided by Theorem \ref{teo:main}. (Notice that the set $Y_2$ defined by the second constraint alone is not bounded, so Theorem \ref{teo:main} does not require the first constraint to be optimal with respect to the second one.) A straightforward computation shows that $X$ is a circumference of radius $2$ centered at $(0,0,1)$ and contained in the plane $z = 1$. However, circumferences do not admit social choice functions. ${}_{\blacksquare}$
\end{example}

Let us explain now more carefully the statement of Theorem \ref{teo:iff}. Consider once more the bounded set $Y$ defined by the first $m-1$ constraints alone. These should be thought of as being fixed once and for all, and we imagine that the last constraint $g_m(p) = c_m$ is a parameter so that the map $g_m$ and the number $c_m$ can vary, yielding a whole family of sets of alternatives $X_{g_m}^{c_m}$. However, since we are only interested in the case when the last constraint is optimal with respect to the others, for each map $g_m$ there are only two possible choices of $c_m$: either the global maximum $c_m^{\rm max}$ or the global minimum $c_m^{\rm min}$ of $g_m|_Y$. The content of Theorem \ref{teo:iff} is that for most choices of $g_m$ both possibilities lead to a set of alternatives where the social choice problem has a solution.

Let us formalize this idea. For any smooth map $g_m : \mathbb{R}^n \longrightarrow \mathbb{R}$ denote $c_m^{\rm max} = \max\ g_m|_Y$ and $c_m^{\rm min} = \min\ g_m|_Y$ (since $Y$ is bounded by assumption, these two numbers are well defined) and consider the two sets of alternatives \[X_{g_m}^{\rm max} = \{p \in \mathbb{R}^n : g_1(p) = c_1, \ldots, g_{m-1}(p) = c_{m-1}, g_m(p) = c_m^{\rm max}\}\] and \[X_{g_m}^{\rm min} = \{p \in \mathbb{R}^n : g_1(p) = c_1, \ldots, g_{m-1}(p) = c_{m-1}, g_m(p) = c_m^{\rm min}\}.\]

With this notation, Theorem \ref{teo:iff} can be stated in more detail as follows:

\begin{thmbis}{teo:iff} \label{teobis:iff} Let the set $Y$ be bounded and connected, $m < n$, and assume that the first $m-1$ constraints satisfy the constraint qualifications. There is a set $\mathcal{M} \subseteq \mathcal{C}^{\infty}(\mathbb{R}^n,\mathbb{R})$ which is open and dense (with the strong $\mathcal{C}^2$--topology) and such that, when $g_m$ belongs to this set, the social choice problems over both $X_{g_m}^{\rm max}$ and $X_{g_m}^{\rm min}$ admit a solution.
\end{thmbis}

The intuitive interpretation of the openness and density properties of $\mathcal{M}$ was mentioned in Section \ref{sec:model} and captures the idea of ``genericity''. The strong $\mathcal{C}^2$--topology can be most easily described by saying that two functions $g_1,g_2$ are $\epsilon$--close when the functions themselves, together with their derivatives up to second order, differ by no more than $\epsilon$ at each point of $\mathbb{R}^n$.

\subsection{Outline of the argument} The basic idea behind Theorem \ref{teo:iff} is that, when the last constraint is optimal with respect to the others and $g_m \in \mathcal{M}$, the set of alternatives $X$ actually reduces to a \emph{finite} number of points, and in this case there do exist social choice functions over $X$. To see why this last assertion is true, begin by labelling the finitely many alternatives (that is, the elements of $X$) in any order. Then, given the bundle of individual preferences $(p_1,\ldots,p_k)$, simply let $F(p_1,\ldots,p_k)$ be that alternative, among those that appear in $(p_1,\ldots,p_k)$, having the highest label. It is easy to check that $F$ is anonymous, unanimous and (trivially) continuous, so it is a social choice function over $X$. (It is, however, questionable to what extent such a function is actually of interest in the realm of social choice.)

We now need to recall a couple of definitions pertaining to Morse theory. Let $M$ be a compact, boundariless manifold, and let $h : M \longrightarrow \mathbb{R}$ be a smooth function (eventually we will take $M = Y$ and $h = g_m|_Y$). A critical point $p \in M$ for $h$ is said to be \emph{nondegenerate} if the matrix of second partial derivatives of $h$ at $p$ is nonsingular; that is, it has a nonzero determinant\footnote{One would express $h$ in local coordinates around $p$ and construct the matrix of second partial derivatives of this local expression. Whether or not this matrix has a nonzero determinant turns out to be independent of the coordinates used to compute it, and this makes the above definition valid. The interested reader can find more information about this in \cite[Chapter 2, pp. 33ff.]{matsumoto1}}. The map $h$ itself is called a \emph{Morse function} if all its critical points are nondegenerate.

By their very definition, nondegenerate critical points of a map $h$ have the property that the quadratic term in the Taylor expansion of $h$ around them is either definite or indefinite, but not semidefinite. In particular, every such critical point is isolated in the sense that it has neighbourhood $U$ that contains no other critical point. Therefore, a Morse function on a compact manifold $M$ can only have finitely many critical points altogether (see for instance \cite[Corollary 2.19, p. 47]{matsumoto1}). This has the following consequence:

\begin{proposition} \label{propbis:sufficient} Let the set $Y$ be bounded and connected, $m < n$, and assume that the first $m-1$ constraints satisfy the constraint qualifications. In addition, suppose that $g_m|_Y$ is a Morse function. Then, if the last constraint is optimal with respect to the others, the social choice problem over $X$ has a solution.
\end{proposition}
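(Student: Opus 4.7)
The plan is to exploit the elementary construction already sketched in the outline of the section: if $X$ is finite then a social choice function on it can be built by the labelling rule. So I would aim to show that, under the hypotheses of the proposition, $X$ consists of only finitely many points, after which there is nothing more to do.

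To that end, I would first observe that the constraint qualification assumption on $g_1,\ldots,g_{m-1}$ makes $Y$ into a smooth submanifold of $\mathbb{R}^n$, and together with boundedness this entails that $Y$ is compact. The function $g_m|_Y$ is smooth and Morse by hypothesis, so each of its critical points is nondegenerate and therefore isolated; by compactness of $Y$ there are only finitely many such critical points in total. The optimal design condition means that $c_m$ equals either $\max g_m|_Y$ or $\min g_m|_Y$, so every element of $X$ is a global extremum of $g_m|_Y$. By the Lagrange multiplier theorem (which applies precisely because the constraint qualifications hold for the first $m-1$ constraints), every such global extremum is a critical point of $g_m|_Y$. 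Hence $X$ is contained in the finite critical set and is itself finite.

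Now, being a finite subset of $\mathbb{R}^n$, the set $X$ carries the discrete topology, and so does $X^k$. I would then enumerate $X$ as $\{q_1,\ldots,q_N\}$ and define $F:X^k\longrightarrow X$ by setting $F(p_1,\ldots,p_k):=q_j$, where $j$ is the largest index such that $q_j\in\{p_1,\ldots,p_k\}$. Anonymity is immediate because $F$ depends only on the unordered multiset of its arguments; unanimity is the identity $F(p,\ldots,p)=p$; and continuity is automatic since the domain is discrete. This $F$ is therefore a social choice function over $X$.

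I do not anticipate any substantive obstacle. The genuine mathematical content reduces to a standard consequence of Morse theory (finiteness of critical points on a compact manifold) combined with the Lagrange multiplier characterisation of constrained extrema, both of which are directly available under the stated hypotheses. The only mild point to keep in mind is that the Morse assumption is used here specifically to guarantee finiteness of the critical set of $g_m|_Y$, so in the subsequent sufficiency argument (Theorem \ref{teo:iff}) the set $\mathcal{M}$ will essentially be the set of smooth maps whose restriction to $Y$ is Morse, and one will need to verify that this set is open and dense in the strong $\mathcal{C}^2$--topology.
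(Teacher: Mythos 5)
Your argument is exactly the one the paper uses: optimality makes every point of $X$ a critical point of $g_m|_Y$, the Morse hypothesis together with compactness of $Y$ makes the critical set finite, and the labelling construction then yields a social choice function on the finite set $X$. The only difference is that you spell out the Lagrange multiplier step and the discrete-topology remark, which the paper leaves implicit.
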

\begin{proof} Every point in $X$ is a critical point of $g_m|_Y$ because of the assumption about the optimality of the last constraint with respect to the others. Since $Y$ is a compact manifold and Morse functions on compact manifolds have only finitely many critical points, it follows that $X$ is finite. This implies that the social choice problem over $X$ indeed has a solution, as discussed above.
\end{proof}

\begin{remark} Suppose that the set $X$ is designed by an agent willing to avoid a social choice paradox. To satisfy the necessary optimality condition the agent would solve the constrained optimization problem \[(P) : \left\{ \begin{array}{lcl} \text{optimize} & & g_m(p) \\ \text{subject to} & &p \in Y; \text{ that is, } \\ & & g_1(p) = c_1, \ldots, g_{m-1}(p) = c_{m-1} \end{array} \right.\] which is an elementary problem in optimization theory which is commonly solved by means of the Lagrangian function \[L(\lambda_1,\ldots,\lambda_{m-1},p) = g_m(p) - \sum_{i=1}^{m-1} \lambda_i \left( g_i(p) - c_i \right).\] It is easy to show that: (i) critical points of $g_m|_Y$ correspond exactly to (unconstrained) critical points of $L$ and (ii) the Hessian of $g_m|_Y$ is nonsingular at a critical point if and only if the same is true of the Hessian of $L$. Thus, the condition laid out in Proposition \ref{propbis:sufficient} that $g_m|_Y$ be a Morse function can be restated in the following familiar and explicitly checkable form: at every critical point of $L$, the Hessian of $L$ has a nonzero determinant.
\end{remark}

Consider the set \[\mathcal{M} := \{g \in \mathcal{C}^{\infty}(\mathbb{R}^n,\mathbb{R}) : g|_Y \text{ is a Morse function}\}.\]

According to Proposition \ref{propbis:sufficient}, whenever $g_m$ belongs to $\mathcal{M}$ the social choice problem over both $X_{g_m}^{\rm max}$ and $X_{g_m}^{\rm min}$ has a solution. Therefore, to establish Theorem \ref{teo:iff} we only need to show that $\mathcal{M}$ is open and dense in $\mathcal{C}^{\infty}(\mathbb{R}^n,\mathbb{R})$. It is well known that Morse functions on a manifold, in this case $Y$, form and open and dense subset of $\mathcal{C}^{\infty}(Y,\mathbb{R})$ (this can be proved by combining \cite[Lemma 2.26, p. 52]{matsumoto1} and \cite[Theorem 2.20, p. 47]{matsumoto1}). However, this does not quite accord to our present situation since we are interested in functions $g$ defined on \emph{all} of $\mathbb{R}^n$ (rather than only on $Y$) and whose \emph{restriction} to $Y$ is a Morse function. Thus there is still some work left to do, and we devote the following subsection to some preparatory results in this direction.

\subsection{Density of Morse functions} Suppose $M$ is a submanifold of some $\mathbb{R}^n$ and $g : M \longrightarrow \mathbb{R}$ is a smooth function. Consider linear perturbations $h$ of $g$ of the form \[M \ni (x_1,\ldots,x_n) \stackrel{h}{\longmapsto} g(x_1,\ldots,x_n) - a_1 x_1 - \ldots - a_nx_n \in \mathbb{R}\] for some coefficients $a_1,\ldots,a_n \in \mathbb{R}$. We are going to prove that these linear perturbations $h$ are almost always Morse functions on $M$. More precisely:

\begin{proposition} \label{prop:linear_pert} Denote by $(x_1,\ldots,x_n)$ the standard Cartesian coordinates of $\mathbb{R}^n$. Let $M \subseteq \mathbb{R}^n$ be a closed smooth manifold and let $g : M \longrightarrow \mathbb{R}$ be a smooth function. Then the set of vectors $(a_1,\ldots,a_n) \in \mathbb{R}^n$ for which the linearly perturbed map \[h(x_1,\ldots,x_n) := g(x_1,\ldots,x_n) - a_1 x_1 - \ldots - a_n x_n\] has some degenerate critical point on $M$ (that is, $h$ is not a Morse function) has Lebesgue measure zero.
\end{proposition}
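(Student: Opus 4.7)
The plan is to reduce the statement to Sard's theorem via an auxiliary manifold construction, together with a simple localization argument. Since $M \subseteq \mathbb{R}^n$ inherits second countability from the ambient space, it admits a countable atlas of local parametrizations $\phi_\alpha : U_\alpha \subseteq \mathbb{R}^d \to M$ (where $d = \dim M$); since a countable union of Lebesgue-null sets is Lebesgue-null, it suffices to show, for each individual chart $\phi : U \to M$, that the set of $a$'s for which $h_a(x) := g(x) - a \cdot x$ has a degenerate critical point inside $\phi(U)$ is Lebesgue-null in $\mathbb{R}^n$.

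Fix such a chart $\phi : U \to M$ and consider the smooth auxiliary map
\[F : U \times \mathbb{R}^n \longrightarrow \mathbb{R}^d, \qquad F(u, a) := \nabla_u(g \circ \phi)(u) - D\phi(u)^T a.\]
A direct computation gives $F(u, a) = \nabla_u(h_a \circ \phi)(u)$, so $F(u, a) = 0$ is precisely the condition that $\phi(u)$ be a critical point of $h_a$ on $M$. Because $\phi$ is an immersion, $D\phi(u)$ has rank $d$, and so $\partial_a F(u, a) = -D\phi(u)^T$ is surjective onto $\mathbb{R}^d$; in particular $0$ is a regular value of $F$, and $Z := F^{-1}(0)$ is a smooth submanifold of $U \times \mathbb{R}^n$ of dimension $n$.

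The key step is to analyze the projection $\pi : Z \to \mathbb{R}^n$, $(u, a) \mapsto a$, and to check that $(u, a)$ is a critical point of $\pi$ if and only if $\phi(u)$ is a \emph{degenerate} critical point of $h_a$. Indeed
\[T_{(u,a)} Z = \bigl\{ (\dot u, \dot a) : H(u, a)\,\dot u = D\phi(u)^T \dot a \bigr\},\]
where $H(u, a) := \partial_u F(u, a)$ coincides with the Hessian of $h_a \circ \phi$ at $u$. The differential $d\pi_{(u,a)}(\dot u, \dot a) = \dot a$ is surjective onto $\mathbb{R}^n$ if and only if the equation $H\dot u = D\phi^T \dot a$ is solvable in $\dot u$ for every $\dot a$; since $D\phi^T$ is already surjective onto $\mathbb{R}^d$, this happens precisely when $H$ is nonsingular. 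Hence the critical points of $\pi$ match the degenerate critical points of $h_a$ exactly. Now $Z$ and $\mathbb{R}^n$ both have dimension $n$, so Sard's theorem ensures that the set of critical values of $\pi$ has Lebesgue measure zero, and this set is exactly the ``bad'' set of $a$'s coming from the chart $\phi(U)$.

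The only substantive part of the argument is the identification of critical points of $\pi$ with degenerate critical points of $h_a$; this is a brief linear-algebra computation on the tangent spaces of $Z$ and is the conceptual heart of the proof. Everything else --- the transversality of $F$, the application of Sard, and the countable union of null sets --- is entirely routine, so I do not anticipate any serious obstacle.
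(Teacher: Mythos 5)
Your argument is correct, but it takes a genuinely different route from the paper's. You set up the ``universal'' map $F(u,a)=\nabla_u(h_a\circ\phi)(u)$ on $U\times\mathbb{R}^n$, note that $0$ is a regular value of $F$ because $D\phi(u)^T$ is already surjective, and then apply Sard directly to the projection $\pi:Z=F^{-1}(0)\to\mathbb{R}^n$, identifying critical points of $\pi$ with pairs $(u,a)$ at which $\phi(u)$ is a degenerate critical point of $h_a$. This is the standard parametric transversality scheme, and your tangent-space computation showing $d\pi_{(u,a)}$ is surjective iff the Hessian $H(u,a)$ is nonsingular is the right one: since $D\phi(u)^T$ maps onto $\mathbb{R}^d$, the equation $H\dot u=D\phi^T\dot a$ is solvable for all $\dot a$ exactly when $H$ is invertible. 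The paper instead invokes a textbook lemma (Matsumoto) that handles perturbations by the $d$ chart coordinates, carefully arranges coordinates so that $d$ of the ambient Cartesian coordinates $x_1,\dots,x_d$ serve as a chart near each point, and then lifts from a null set in $\mathbb{R}^d$ to a null set in $\mathbb{R}^n$ via a Cavalieri/Fubini slicing argument over the remaining $(a_{d+1},\dots,a_n)$, finishing with a finite cover by compactness. Your version avoids both the special choice of coordinates and the Fubini step, since the surjectivity of $D\phi^T$ packages all $n$ perturbation parameters at once; the price is that you must construct and analyze the auxiliary manifold $Z$ rather than quote a ready-made lemma. Both proofs ultimately rest on Sard's theorem and both are sound; one small remark is that since $M$ is compact, a finite atlas already suffices and the appeal to second countability, while harmless, is not needed.
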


The proof requires an auxiliary result, which is essentially \cite[Lemma 2.26, p. 52]{matsumoto1}. We state it in a slightly stronger form that follows immediately from its proof (see, in particular, how the coefficients $a_i$ are chosen in \cite[p. 50]{matsumoto1} using Sard's theorem):
\smallskip

{\it Lemma.} Let $M$ be a closed smooth manifold of dimension $d$ and let $U \subseteq M$ be a coordinate chart with coordinates $(y_1,\ldots,y_d)$ [thus a point $q \in U$ has coordinates $(y_1(q),\ldots,y_d(q)) \in \mathbb{R}^d$]. Let $f : U \longrightarrow \mathbb{R}$ be any smooth function. Then, the set of vectors $(a_1, \ldots, a_d) \in \mathbb{R}^d$ for which the map \[U \ni q \longmapsto f(q)  - a_1y_1(q) + \ldots - a_d y_d(q)\] has some degenerate critical point in $U$ has Lebesgue measure zero.
\smallskip

\begin{proof}[Proof of Proposition \ref{prop:linear_pert}] Let $d$ be the dimension of $M$. Pick a point $p \in M$ and consider $T_p M$, the subspace tangent to $M$ at $p$. This is a $d$--dimensional subspace of $\mathbb{R}^n$, so there exist $d$ coordinates among $x_1,\ldots,x_n$ that parameterize the points in $T_p M$, and we assume for notational convenience that these coordinates are $x_1, \ldots, x_d$. The projection map $\pi : (x_1,\ldots,x_n) \longmapsto (x_1,\ldots,x_d)$ then restricts to an isomorphism of $T_p M$ onto $\mathbb{R}^d$. As a consequence of the inverse function theorem, $p$ has an open neighbourhood $U$ in $M$ such that $\pi|_U : U \longrightarrow \pi(U) \subseteq \mathbb{R}^d$ is a diffeomorphism onto its image; that is, $(x_1,\ldots,x_d)$ provides a coordinate system on $U$.

Let $K \subseteq U$ be a compact neighbourhood of $p$ and consider the set \[A_K = \left\{(a_1,\ldots,a_n) \in \mathbb{R}^n : g - \sum_i a_i x_i\text{ has at least one degenerate critical point on $K$}\right \}.\] Clearly $A_K$ is a closed set. We are going to show that it has measure zero. To this end we shall consider the intersections of $A_K$ with the subspaces of $\mathbb{R}^n$ that result from fixing the last $n-d+1$ components of $(a_1,\ldots,a_n)$, and show that each of these intersections has measure zero as a subset of $\mathbb{R}^d$. The fact that $A_K$ is a Borel set (because it is closed) then implies that $A_K$ itself has measure zero. (This is essentially Cavalieri's principle; a particular case of Tonelli's iterated integration theorem: see for instance \cite[Theorem 5.1.4, p. 145]{cohn1}.)

Fix, then, values $a^0_{d+1}, \ldots, a^0_n$ and consider the auxiliary map $f := g - \sum_{i=d+1}^n a_i^0 x_i$. In terms of $f$ we may write the intersection $A_K \cap \{a_{d+1} = a_{d+1}^0, \ldots, a_n = a_n^0\}$, viewed as a subset of $\mathbb{R}^d$, as \begin{equation} \label{eq:A_K} \left\{(a_1, \ldots, a_d) \in \mathbb{R}^d : f - \sum_{i=1}^d a_i x_i \text{ has at least one degenerate critical point on $K$}\right\}.\end{equation}

Although the map $f$ is defined on all of $M$, we are only interested in its behaviour on $U$, where the coordinates $(x_1,\ldots,x_d)$ are valid. Evidently the intersection $A_K \cap \{a_{d+1} = a_{d+1}^0, \ldots, a_n = a_n^0\}$, as described by Equation \eqref{eq:A_K}, is contained in \[\left\{(a_1, \ldots, a_d) \in \mathbb{R}^d : f - \sum_{i=1}^d a_i x_i \text{ has at least one degenerate critical point on $U$}\right\}\] and this set has measure zero according to the lemma stated just before this proposition (applied to the coordinate system $(y_1,\ldots,y_d) = (x_1,\ldots,x_d)$). Consequently its subset $A_K \cap \{a_{d+1} = a_{d+1}^0, \ldots, a_n = a_n^0\}$ has measure zero too. This is true for any $(a_{d+1}^0,\ldots,a_n^0)$ so $A_K$ has measure zero by Cavalieri's principle, as argued earlier.

Now we can easily finish the proof of the proposition. Performing the above construction for every $p \in M$ yields a family of compact neighbourhoods $K_p$ of $p$ such that the corresponding $A_{K_p}$ has measure zero. Since $M$ is compact, one may cover it with only finitely many of the $K_p$; say $K_{p_1},\ldots,K_{p_r}$. Then the set $A := \bigcup_{j=1}^r A_{K_{p_j}}$ is closed, has measure zero, and any $(a_1,\ldots,a_n) \not\in A$ satisfies, by construction, that it does not have any degenerate critical points in any of the $K_p$ and so it does not have any degenerate critical points in $M$, as was to be shown.
\end{proof}

\subsection{The proof of Theorem \ref{teo:iff}} As mentioned earlier, we only need to show that the set \[\mathcal{M} = \{g \in \mathcal{C}^{\infty}(\mathbb{R}^n,\mathbb{R}) : g|_Y \text{ is a Morse function}\}\] is open and dense in $\mathcal{C}^{\infty}(\mathbb{R}^n,\mathbb{R})$.
\smallskip

(1) Openness. Pick a map $g \in \mathcal{M}$, so that $g|_Y$ is a Morse function. This means that $g|_Y$ has only finitely many critical points and the Hessian of $g|_Y$ is nonsingular (computed in some, and hence any, coordinate system) at each of them. Recall that the critical points of $g|_Y$ are precisely those $p \in Y$ where the gradient of $g$ is perpendicular to $Y$. Let $U$ be a neighbourhood of the critical points of $g|_Y$ so small that (condition (i)) the Hessian of $g|_Y$ is nonsingular everywhere in $U$. Notice also that since $g|_Y$ has no critical points outside $U$ by construction the gradient of $g$ is never perpendicular to $Y$ outside $U$ (condition (ii)). Since both (i) and (ii) depend only on the first and second derivatives of $g$, they are clearly satisfied for any sufficiently small perturbation of $g$ in the $\mathcal{C}^2$--topology. Thus $\mathcal{M}$ is an open subset of $\mathcal{C}^{\infty}(\mathbb{R}^n,\mathbb{R})$.
\smallskip

(2) Density. We have to show that given any $g \in \mathcal{C}^{\infty}(\mathbb{R}^n,\mathbb{R})$ and any $\epsilon > 0$ there is $g' \in \mathcal{C}^{\infty}(\mathbb{R}^n,\mathbb{R})$ that is $\epsilon$--close to $g$ and such that $g' \in \mathcal{M}$; that is, $g'|_Y$ is a Morse function. This is a relatively straightforward application of Proposition \ref{prop:linear_pert}, but some care has to be exercised. Let $V$ be an compact neighbourhood of $Y$ in $\mathbb{R}^n$ and pick a smooth function $\theta : \mathbb{R}^n \longrightarrow [0,1]$ such that $\theta|_Y \equiv 1$ and $\theta \equiv 0$ outside $V$. Consider the map \[g'(x_1,\ldots,x_n) := g(x_1,\ldots,x_n) - \theta(x_1,\ldots,x_n) \cdot \sum_i a_i x_i.\] We claim that $g-g'$ and its derivatives up to second order (with respect to the $x_i$) can be made everywhere less than $\epsilon$ just by taking all the $a_i$ sufficiently small. Let us consider, for instance, its second derivatives. A straightforward computation shows that \begin{equation} \label{eq:g_prime} \frac{\partial^2}{\partial x_k \partial x_j}(g-g') = \frac{\partial^2 \theta}{\partial x_k \partial x_j} \cdot \sum_i a_i x_i + \frac{\partial \theta}{\partial x_j} a_k + \frac{\partial \theta}{\partial x_k} a_j.\end{equation} Since $\theta$ is constant outside $V$, all the partial derivatives of $\theta$ vanish there and therefore the same is true of whole right hand side of Equation \eqref{eq:g_prime}. Since $V$ is compact, all the partial derivatives of $\theta$ are be bounded above on $V$ by some constant $S$. Also, there exists a number $R$ such that $V$ is entirely contained in the cube of side $2R$ centered at the origin; that is, $|x_i| \leq R$ whenever $(x_1,\ldots,x_n) \in V$. Then the absolute value of the right hand side of Equation \ref{eq:g_prime} can be bounded above by $SR \sum_i |a_i| + S|a_k| + S|a_j|$. This makes it clear that choosing all the components of $(a_1,\ldots,a_n)$ to satisfy $|a_i| < \delta$ for a suitable $\delta > 0$ one can achieve that $g'$ is $\epsilon$--close to $g$ in the $\mathcal{C}^2$--topology.

Finally, notice that by construction $g' = g - \sum_i a_i x_i$ on $Y$. According to Proposition \ref{prop:linear_pert} the set $A$ of vectors $(a_1,\ldots,a_n) \in \mathbb{R}^n$ for which this the restriction $g'|_Y$ is a Morse function on $Y$ has full measure in $\mathbb{R}^n$. In particular, $A$ has a nonempty intersection with the open cube centered at the origin and with diameter $\delta$, and for any $(a_1,\ldots,a_n)$ in that intersection one has that $g'|_Y$ is a Morse function and $g'$ is $\epsilon$--close to $g$. This finishes the proof.

\section{Appendix A: proofs of Lemmas \ref{lem:var1} and \ref{lem:var2}}

In this appendix we establish Lemmas \ref{lem:var1} and \ref{lem:var2}. As mentioned earlier, we need to use some homology theory. Since this topic is rather elaborate we cannot even recall here the basic definitions, so we refer the interested reader to the book by Hatcher \cite{hatcher} and limit ourselves to state the results that we need.

To any space $U$ we may assign a sequence of (real) vector spaces $H_j(U;\mathbb{R})$ for $j = 0,1,2,\ldots$ which capture some geometric information about $U$. These are called the $j$--dimensional \emph{homology groups} of $U$ (even though they are actually vector spaces) with coefficients in the real numbers. For simplicity we shall just speak of the homology groups of $U$ and denote them $H_j(U)$, supressing $\mathbb{R}$ from the notation.

The following properties hold:

\begin{itemize}
	\item[({\it a}\/)] The dimension of $H_0(U)$ is the number of path connected components of $U$.
  \item[({\it b}\/)] If $U$ is contractible, then $H_0(U) = \mathbb{R}$ and $H_j(U) = \{0\}$ for every $j \geq 1$.
  \item[({\it c}\/)] Poincar\'e duality: if $U$ is a compact, boundariless manifold of dimension $d$, then $H_j(U) = H_{d-j}(U)$ for every $j$.
\end{itemize}

With these properties, the proof of Lemma \ref{lem:var1} reduces to a simple computation:

\begin{proof}[Proof of Lemma \ref{lem:var1}] Let $U$ be a connected component of $M$. Then $U$ is itself a compact manifold of dimension $d \geq 1$ and without boundary. We have $H_0(U;\mathbb{R}) = \mathbb{R}$ by ({\it a}\/) above, because $U$ is connected. By Poincar\'e duality $H_d(U;\mathbb{R}) = H_0(U;\mathbb{R}) = \mathbb{R}$, and it follows from ({\it b}\/) that $U$ is not contractible, because it has a homology group of dimension $\geq 1$ (namely, its $d$--dimensional homology group) which is nonzero.
\end{proof}

The proof of the second lemma requires the use of relative homology groups, which are defined not just for a space $U$ but for a pair $(U,U_0)$ formed by a space $U$ and a subset $U_0$ of $U$. For each $j = 0,1,2, \ldots$ there is a real vector space $H_j(U,U_0)$ called the $j$--dimensional \emph{relative homology group of the pair} $(U,U_0$) with coefficients in the real numbers. There is a relation between the relative homology of a pair $(U,U_0)$ and the homology groups of both $U$ and $U_0$ which is expressed by a so-called long exact sequence as follows. For each dimension $j$ there are linear maps $H_j(U_0) \longrightarrow H_j(U)$, $H_j(U) \longrightarrow H_j(U,U_0)$ and $H_j(U,U_0) \longrightarrow H_{j-1}(U_0)$ that fit into a sequence \[\ldots \longrightarrow H_2(U) \longrightarrow H_2(U,U_0) \longrightarrow H_1(U_0) \longrightarrow H_1(U) \longrightarrow H_1(U,U_0) \longrightarrow \{0\}\] (which continues to the left in the same fashion) having the property of being \emph{exact}: the image of the map entering any one of the terms of the sequence coincides with the kernel of the map connecting that term to the one to its right.

In addition to this, we shall also make use of
\begin{itemize}
	\item[({\it d}\/)] Lefschetz duality: if $U$ is a compact manifold (with boundary) of dimension $d$, then $H_j(U,\partial U) = H_{d-j}(U)$ for every $j$.
\end{itemize}

\begin{proof}[Proof of Lemma \ref{lem:var2}] Since $M$ is contractible and $d-1 \geq 1$ by assumption, property ({\it b}\/) entails that $H_{d-1}(M) = \{0\}$. By Lefschetz duality (applied to $U = M$) one then has $H_1(M,\partial M) = H_{d-1}(M) = \{0\}$. Again from ({\it b}\/) one also has $H_0(M) = \mathbb{R}$.

Consider the following portion of the long exact sequence for the pair $(M,\partial M)$: \[H_1(M,\partial M) \stackrel{\alpha}{\longrightarrow} H_0(\partial M) \stackrel{\beta}{\longrightarrow} H_0(M) \stackrel{\gamma}{\longrightarrow} \{0\}\] which, taking into account the previous paragraph, reads \[\{0\} \stackrel{\alpha}{\longrightarrow} H_0(\partial M) \stackrel{\beta}{\longrightarrow} \mathbb{R} \stackrel{\gamma}{\longrightarrow} \{0\}.\] Clearly both $\alpha$ and $\gamma$ must be the zero homomorphisms, so the image of $\alpha$ is zero and the kernel of $\gamma$ is all of its source space $\mathbb{R}$. Thus by the exactness of the sequence, the kernel of $\beta$ is zero too, so that $\beta$ is injective, and the image of $\beta$ is $\mathbb{R}$. Hence $\beta$ establishes an isomorphism between $H_0(\partial M)$ and $\mathbb{R}$. Using ({\it a}\/) we conclude that $\partial M$ is connected.
\end{proof}

\section{Appendix B: proofs of Lemmas \ref{lem:extend1} and \ref{lem:morse}} \label{app:b}

\subsection{Proof of Lemma \ref{lem:extend1}} It is both notationally and conceptually simpler to prove a slightly more general result, from which Lemma \ref{lem:extend1} follows letting $M = X_{[u_1,u_2]}$ and $Z = X_u$:

\begin{lemma} Let $M$ be a compact manifold and $Z \subseteq M$ a closed subset of $M$. Suppose $F : Z^k \longrightarrow Z$ is an SCF over $Z$. Then there exist a neighbourhood $U$ of $Z$ in $M$ and a continuous map $F_U : U^k \longrightarrow M$ such that $F_U$ is unanimous and anonymous.
\end{lemma}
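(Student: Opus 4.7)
The plan is to reduce the problem to a Tietze-style extension in Euclidean space, building in unanimity as prescribed boundary data and then symmetrizing to recover anonymity. First, I would embed $M$ as a closed subset of some $\mathbb{R}^N$ (possible since $M$ is a compact manifold, possibly with boundary) and use the fact that $M$ is an absolute neighbourhood retract to fix a retraction $r : V \to M$ from some open neighbourhood $V \supseteq M$ in $\mathbb{R}^N$. The strategy is to construct a continuous map $\bar G : M^k \to \mathbb{R}^N$ which is anonymous, coincides with $F$ on $Z^k$, and satisfies $\bar G(p,\ldots,p) = p$ for every $p \in M$; then, on a sufficiently small neighbourhood $U$ of $Z$ on which $\bar G$ still takes values in $V$, one defines $F_U := r \circ \bar G|_{U^k}$.

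To build $\bar G$, consider the closed subset $C := Z^k \cup \Delta M$ of $M^k$, where $\Delta M := \{(p,\ldots,p) : p \in M\}$ is the diagonal. Define $G : C \to M$ by $G|_{Z^k} := F$ and $G(p,\ldots,p) := p$; the two prescriptions agree on $Z^k \cap \Delta M$ precisely because $F$ is unanimous, so $G$ is a well-defined continuous map. By the Tietze extension theorem, applied componentwise in $\mathbb{R}^N$, there is a continuous extension $\tilde G : M^k \to \mathbb{R}^N$. To enforce anonymity without destroying the values on $C$, symmetrize:
\[\bar G(p_1,\ldots,p_k) := \frac{1}{k!} \sum_{\sigma \in S_k} \tilde G(p_{\sigma(1)},\ldots,p_{\sigma(k)}).\]
The essential point is that both $Z^k$ and $\Delta M$ are invariant under coordinate permutations, and on each of them $\tilde G$ is already symmetric in its arguments (equal to the anonymous $F$ on the first, constant in each tuple on the second); hence $\bar G$ still agrees with $G$ on $C$, is anonymous everywhere, and is of course continuous.

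To finish, I need a neighbourhood $U$ of $Z$ in $M$ with $\bar G(U^k) \subseteq V$. Since $\bar G(Z^k) = F(Z^k) \subseteq M \subseteq V$ and $\bar G^{-1}(V)$ is open in the compact metric space $M^k$, a standard tube-lemma-type argument (cleanly done by contradiction via sequential compactness) shows that $U := \{p \in M : d(p,Z) < \delta\}$ works for sufficiently small $\delta > 0$. Setting $F_U := r \circ \bar G|_{U^k}$ gives a continuous map into $M$ which is anonymous (since $\bar G$ is) and unanimous, because $F_U(p,\ldots,p) = r(\bar G(p,\ldots,p)) = r(p) = p$ as $p \in M$ is fixed by the retraction.

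The principal obstacle I expect is the simultaneous enforcement of continuity, anonymity, and unanimity: a plain Tietze extension delivers only continuity, and a plain symmetrization preserves neither unanimity nor given values on arbitrary closed sets. The trick that resolves this is that both $Z^k$ (by anonymity of the original $F$) and $\Delta M$ (trivially) are $S_k$-invariant, so prescribing unanimity as additional boundary data on $\Delta M$ before extending, and symmetrizing afterwards, is fully consistent and yields all three required properties in one stroke.
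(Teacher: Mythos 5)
Your argument is correct, and it takes a genuinely different route from the paper. The paper handles anonymity by passing to the quotient space $M^k/\!\sim$ under the $S_k$-action and then invoking a specific ANR-extension theorem (Hu, \emph{Theory of retracts}) to extend the induced map from the closed set $\pi(D)$ to a neighbourhood in the quotient, pulling the result back along the projection. You instead embed $M$ in $\mathbb{R}^N$, extend via componentwise Tietze into the ambient Euclidean space, enforce anonymity by linear averaging over $S_k$ (which is only possible because you have left $M$ and are working in a vector space), and then retract back to $M$ using the ANR retraction. The crucial observation making the symmetrization harmless --- that $Z^k$ and the diagonal $\Delta M$ are both $S_k$-invariant and that $\tilde G$ is already symmetric on each of them (by anonymity of $F$ on the former, trivially on the latter) --- is correct and is exactly the point that saves the values on $C$. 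Both proofs rely on $M$ being an ANR, but yours isolates that reliance to the elementary statement ``there is a retraction of an open $\mathbb{R}^N$-neighbourhood onto $M$,'' and otherwise uses only the Tietze extension theorem and a standard compactness/tube argument; the paper's version is shorter on the page but leans on a less elementary extension theorem and a quotient construction that has to be unwound. Your approach is thus somewhat more self-contained, at the (mild) cost of the explicit Euclidean embedding and the $\delta$-bookkeeping at the end.
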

\begin{proof} Think of $F$ as a mapping $F : Z^k \longrightarrow M$ and extend it setting $F(p,\ldots,p) = p$ for every $p \in M$. Now its domain is \[D := Z^k \cup \{(p,\ldots,p) : p \in M\},\] which is a compact subset of $M^k$. Clearly $F$ is still continuous on this new larger domain $D$.

Consider the quotient space obtained from $M^k$ by identifying, via an equivalence relation $\sim$, each $k$--tuple $(p_1,\ldots,p_k)$ with all of its permutations. We shall denote $\pi : M^k \longrightarrow M^k/\sim$ the canonical projection. The set $D$ projects onto a compact subset $\pi(D)$ of $M^k/\sim$. In turn the map $F$, due to its invariance under permutation of its arguments, descends to a continuous map \[\bar{F} : \pi(D) \longrightarrow M.\]

Now we make use of the following extension result: every continuous map from a closed subset of a metric space into a manifold $M$ can be extended to a continuous map defined on a neighbourhood $W$ of the subset (see for instance \cite[Proposition 8.3, p. 47]{hu1}). Applying this result to the closed subset $\pi(D)$ of the metric space $M^k/\sim$ and the map $\bar{F}$ we see that the latter can be extended continuously to a neighbourhood $W$ of $\pi(D)$ in $M^k/\sim$. For notational ease the extension will still be denoted $\bar{F}$.

The set $\pi^{-1}(W)$ is a neighbourhood of $D$ in $M^k$, so in particular it is a neighbourhood of $Z^k$. It is easy to see that there exists a neighbourhood $U$ of $Z$ in $M$ such that $U^k \subseteq \pi^{-1}(W)$. Then the map \[F_U : U^k \longrightarrow M \ \ ; \ \ F_U(p_1,\ldots,p_k) = (\bar{F} \circ \pi)(p_1,\ldots,p_k)\] provides the desired extension: it is clearly continuous and unanimous, and it is also anonymous because any two permutations of a $k$--tuple $(p_1,\ldots,p_k)$ are projected by $\pi$ onto the same element of $M^k/\sim$.
\end{proof}

\subsection{Proof of Lemma \ref{lem:morse}} The construction of the map $r$ is rather indirect: we shall define a tangent vectorfield on $Y$, consider the flow $\varphi$ that it generates and then use $\varphi$ to define $r$. This approach is closely related to Morse theory, and a quick glance at the book by Milnor \cite[pp. 12 and 13]{milnor1} may be useful. Some acquaintance with differential geometry is required to follow the argument.

Recall that $Y \subseteq \mathbb{R}^n$ is a differentiable manifold defined by the constraints $g_i(p) = c_i$ for $i = 1, 2, \ldots, m-1$. At any point $p \in Y$ their gradients are all orthogonal to $Y$ or, otherwise stated, the tangent space to $Y$ at $p$ is the subspace of $\mathbb{R}^n$ orthogonal to all the $\{\nabla g_i(p) : 1 \leq i \leq m-1\}$. Denote $V(p)$ the projection of $\nabla g_m(p)$ onto that tangent space, thus obtaining a tangent vectorfield $p \longmapsto V(p)$ on $Y$. This vectorfield $V(p)$ can be given a very rough but rather helpful intuitive interpretation: inasmuch as $\nabla g_m(p)$ tells us the direction along which $f$ increases most quickly, its projection $V(p)$ tells us in what direction we should advance to obtain the quickest increase of $g_m$ \emph{while remaining in $Y$}.
\smallskip

{\it Assertion 1.} $V(p)$ is zero precisely when $p$ is a critical point of $g_m|_Y$.
\begin{proof} Notice that $V(p)$ is zero precisely when $\nabla g_m(p)$ is orthogonal to $Y$ at $p$; that is to say, precisely when $\nabla g_m(p)$ is a linear combination of the gradients $\{\nabla g_i(p) : 1 \leq i \leq m-1\}$ or, equivalently, when $p$ is a critical point of $g_m|_X$.
\end{proof}

{\it Assertion 2.} The scalar product $\nabla g_m(p) \cdot V(p)$ is always nonnegative and it is actually positive when $p$ is not a critical point of $g_m|_Y$.
\begin{proof} Observe that by construction the angle between $\nabla g_m(p)$ and $V(p)$ is at most ninety degrees, so the scalar product $\nabla g_m(p) \cdot V(p)$ is always nonnegative. Together with Assertion 1, this proves the result.
\end{proof}

Using $V(p)$ we define a new tangent vectorfield $W : p \longmapsto (u-g_m(p)) V(p)$. Let $\varphi : Y \times \mathbb{R} \longrightarrow Y$ be the flow generated on $Y$ by the vectorfield $W$. Notice that, since $Y$ is compact, $\varphi$ is globally defined.

To gain some geometric intuition consider again the situation shown in Figure \ref{fig:explanation}. Recall that $g_2(x,y,z) = z$, so its gradient is the constant vector $(0,0,1)$. The vectorfield $V(p)$ points, at each $p \in Y$, in the direction that yields the quickest increase in height while remaining within $Y$. The vector field $W(p)$ is obtained multiplying $V(p)$ by the modulating factor $u - g_2(p) = u - z$, which is zero precisely on $X_u$, negative above $X_u$ and positive below $X_u$. Taking into account these signs, $W(p)$ is zero on $X_u$, points in the direction of quickest \emph{descent} if $p$ is above $X_u$, and points in the direction of quickest ascent if $p$ is below $X_u$. The vectors $W(p)$ also become shorter as $p$ approaches $X_u$, since the factor $u - g_2(p)$ becomes closer to zero. Figure \ref{fig:lema9}provides a sketch of $W(p)$ (shown as arrows tangent to $X_{[u_1,u_2]}$) and of some of the trajectories of the flow $\varphi$ obtained by integrating $W(p)$.

If we follow the directions of $W(p)$, starting at any point $p$, it seems intuitively clear that we will move towards $X_u$ advancing ever more slowly, since $W(p)$ (which is our speed) becomes smaller the closer we get to $X_u$. Unless $p \in X_u$, in which case we would actually stay still since $W(p) = 0$, we would approach $X_u$ asymptotically but never get there. In any case, there will be a finite time $t_p$ at which we will enter any prescribed neighbourhood $U$ of $X_u$ and never leave it again. The map $r$ that we are looking for will essentially be defined as $r(p) = \text{the point we reach at time } t_p$. In our trip from $p$ to $r(p)$ we might follow a simple path like the ones shown to the left side of the drawing or, possibly, a much more complicated one which approaches $X_u$ spiralling around it or in some other strange fashion.

\begin{figure}[h!]
\begin{pspicture}(0,0)(8,3.5)
\rput[bl](0.5,0){\scalebox{0.5}{\includegraphics{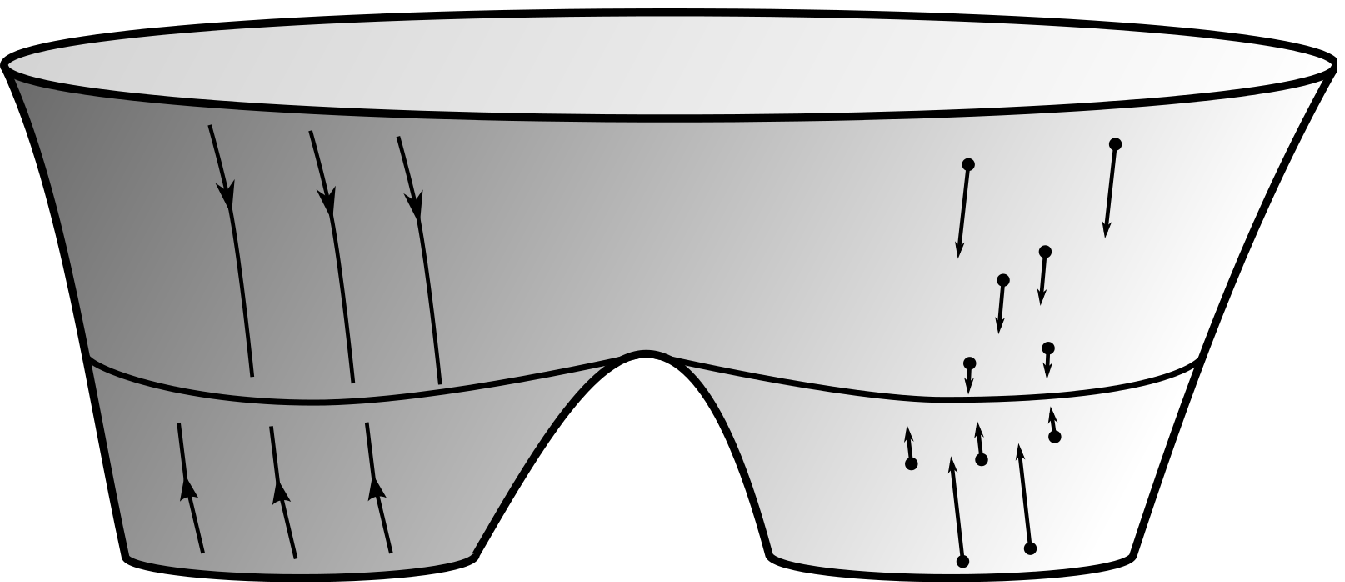}}}
\rput(2,3.2){$X_{[u_1,u_2]}$}
\end{pspicture}
\caption{\label{fig:lema9}}
\end{figure}

Let us go back to mathematics again. The following proposition collects some properties of $\varphi$ that are the formal counterparts of the ideas just described:

\begin{proposition} \label{prop:flujo} The flow $\varphi$ has the following properties:
\begin{itemize}
        \item[(1)] For every $q \in X_{[u_1,u_2]}$ and every $t \geq 0$, the point $\varphi(q,t)$ belongs to $X_{[u_1,u_2]}$ too.
        \item[(2)] For every neighbourhood $U$ of $X_u$ in $X_{[u_1,u_2]}$ there exists $T > 0$ such that $\varphi(q,t) \in U$ for every $q \in X_{[u_1,u_2]}$ and every $t \geq T$.
\end{itemize}
\end{proposition}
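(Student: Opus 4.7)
The plan is to trace how $g_m$ evolves along trajectories of $\varphi$, since the whole point of the construction of $W$ is to drive the $g_m$-values towards $u$. Write $\alpha(p) := |V(p)|^2$; because $V(p)$ is the orthogonal projection of $\nabla g_m(p)$ onto $T_p Y$, one has $\nabla g_m(p) \cdot V(p) = \alpha(p) \geq 0$ (in agreement with Assertion 2). A direct computation along an integral curve $t \mapsto \varphi(q,t)$ then gives
\[
\frac{d}{dt}\bigl(g_m(\varphi(q,t)) - u\bigr) \;=\; \nabla g_m(\varphi(q,t)) \cdot W(\varphi(q,t)) \;=\; -\alpha(\varphi(q,t)) \,\bigl(g_m(\varphi(q,t)) - u\bigr),
\]
which integrates to the explicit identity
\[
g_m(\varphi(q,t)) - u \;=\; \bigl(g_m(q) - u\bigr)\, \exp\!\left(-\int_0^t \alpha(\varphi(q,s))\,ds\right).
\]
Hence $|g_m(\varphi(q,\cdot)) - u|$ is nonincreasing and $g_m(\varphi(q,t))$ lies between $g_m(q)$ and $u$. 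Since $q \in X_{[u_1,u_2]}$ gives $g_m(q) \in [u_1,u_2]$, and $u \in [u_1,u_2]$, one gets $g_m(\varphi(q,t)) \in [u_1,u_2]$ for every $t \geq 0$; as trajectories stay in $Y$ by construction, this proves (1).

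For (2) I first establish a tube lemma: given any open neighbourhood $U$ of $X_u$ in $X_{[u_1,u_2]}$ there exists $\delta > 0$ with
\[
T_\delta \;:=\; \{p \in X_{[u_1,u_2]} : |g_m(p) - u| < \delta\} \;\subseteq\; U.
\]
Otherwise one could choose $p_n \in X_{[u_1,u_2]} \setminus U$ with $g_m(p_n) \to u$, extract a convergent subsequence by compactness, and reach the contradiction that its limit belongs to $X_u \subseteq U$ yet not to the open set $U$.

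It now suffices to produce $T > 0$ with $\varphi(q,t) \in T_\delta$ for every $q \in X_{[u_1,u_2]}$ and every $t \geq T$. The compact set $K_\delta := X_{[u_1,u_2]} \setminus T_\delta$ contains no critical point of $g_m|_Y$: any such critical point lying in $X_{[u_1,u_2]}$ has its critical value in $[u_1,u_2]$, and the choice of $u_1,u_2$ forces this value to be $u$, placing the point in $X_u \subseteq T_\delta$. By Assertion 1, $V$ is nowhere zero on $K_\delta$, so $\alpha = |V|^2$ attains a strictly positive minimum $\alpha_0 > 0$ on $K_\delta$. With $M := \max(u - u_1,\, u_2 - u)$ and $T := \alpha_0^{-1}\log(2M/\delta)$, for each $q$ one of two things happens: either the trajectory enters $T_\delta$ at some time $s \leq T$ and, by the monotonicity established in (1), stays in $T_\delta$ forever after; or it remains in $K_\delta$ throughout $[0,T]$, in which case $\alpha(\varphi(q,s)) \geq \alpha_0$ for $s \in [0,T]$ and the explicit formula yields $|g_m(\varphi(q,T)) - u| \leq M e^{-\alpha_0 T} \leq \delta/2 < \delta$, contradicting $\varphi(q,T) \in K_\delta$.

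The only nontrivial point in this scheme is the identification of the zero set of $W$ inside $X_{[u_1,u_2]}$ with $X_u$; this is where the standing assumption that $g_m|_Y$ has only finitely many critical values, combined with the fact that $[u_1,u_2]$ was chosen to contain only the critical value $u$, does its work. Once that is in place, everything reduces to a one-dimensional monotone ODE plus a compactness estimate, both thoroughly standard.
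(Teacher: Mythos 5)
Your proof is correct, and it takes a genuinely more streamlined route than the paper's. The pivot is your observation that, because $V(p)$ is the orthogonal projection of $\nabla g_m(p)$ onto $T_pY$, one has the exact identity $\nabla g_m(p)\cdot V(p)=|V(p)|^2$; the paper only records that this scalar product is nonnegative. That sharper identity turns the evolution of $g_m$ along trajectories into an explicitly solvable scalar linear ODE, whose closed-form solution
\[
g_m(\varphi(q,t))-u=\bigl(g_m(q)-u\bigr)\exp\!\Bigl(-\textstyle\int_0^t\alpha(\varphi(q,s))\,ds\Bigr)
\]
simultaneously delivers sign preservation, monotonicity of $|g_m(\varphi(q,\cdot))-u|$, and --- via the uniform lower bound $\alpha\geq\alpha_0>0$ on the compact region $K_\delta$ away from $X_u$ --- a quantitative, $q$-independent decay rate. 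The paper instead proceeds through Assertions 3--6: sign preservation from the fixed-point property of $X_u$ and uniqueness of trajectories, monotonicity by inspecting the sign of the derivative, pointwise convergence to $u$ by a contradiction argument with the mean value theorem, and finally a separate covering/flow-continuity argument to upgrade pointwise to uniform convergence. Your version collapses the last two steps into a single compactness estimate on $\alpha$, eliminating both the mean value theorem step and the finite subcover; what the paper's route buys in exchange is that it never commits to the specific multiplicative structure of $W$, whereas your explicit integral formula exploits it fully. Both rely on the same essential fact (which you correctly isolate) that the zero set of $W$ inside $X_{[u_1,u_2]}$ is exactly $X_u$, a consequence of $u_1,u_2$ bracketing the unique critical value $u$.
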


In the parlance of dynamical systems, (1) means that $X_{[u_1,u_2]}$ is positively invariant under $\varphi$ and (2) states that $X_u$ is a stable attractor in $X_{[u_1,u_2]}$. As a preparation to prove the proposition we are going to investigate some qualitative properties of the trajectories of $\varphi$.

Fix a point $q \in X_{[u_1,u_2]}$ and let $\gamma$ be the trajectory of $\varphi$ with initial condition $\gamma(0) = q$ (in terms of the flow, $\gamma(t) = \varphi(q,t)$). More explicitly, $\gamma: \mathbb{R} \longrightarrow Y$ is a smooth curve in $Y$ which satisfies $\gamma(0) = q$ and \[\frac{{\rm d} \gamma}{{\rm d}t}(t) = W(\gamma(t))\] for every $t \in \mathbb{R}$ (that is, $\gamma$ is an integral curve of the vectorfield $W$). We are interested in the behaviour of $\gamma(t)$ for $t \geq 0$, and for definiteness we consider the case $u < f(q) \leq u_2$.
\smallskip

{\it Assertion 3.} The inequality $u < g_m(\gamma(t))$ holds for every $t \in \mathbb{R}$.
\begin{proof} Each point of $X_u$ is a zero of $W$ and therefore a fixed point of the flow $\varphi$. Since the trajectory $\gamma$ goes through the point $q$, which does not belong to $X_u$, it follows that $\gamma(t) \not\in X_u$ for every $t \in \mathbb{R}$. Consider the map $t \longmapsto (g_m \circ \gamma)(t)$. By what we have just seen, it never attains the value $u$, so (as it continuous) it must be the case that $g_m(\gamma(t))$ is either always $> u$ or $< u$. Since we have taken $g_m(q) > u$, it follows that $g_m(\gamma(t)) \neq u$ for every $t \in \mathbb{R}$.
\end{proof}

{\it Assertion 4.} The inequality $g_m(\gamma(t)) \leq u_2$ holds for every $t \geq 0$.
\begin{proof} Using the chain rule we compute the time derivative of the map $t \longmapsto (g_m \circ \gamma)(t)$ as follows: \[ \frac{{\rm d}}{{\rm d}t}(g_m \circ \gamma)(t) = \nabla g_m(\gamma(t)) \cdot \frac{{\rm d}\gamma}{{\rm d}t}(t) = \nabla g_m(\gamma(t)) \cdot W(\gamma(t)),\] and since by definition $W(p) = (u-g_m(p)) V(p)$, we have $\nabla g_m(p) \cdot W(p) = (u-g_m(p)) \nabla g_m(p) \cdot V(p)$, so \begin{equation} \label{eq:der} \frac{{\rm d}}{{\rm d}t} (g_m \circ \gamma)(t) = (u-g_m(\gamma(t))) \nabla g_m(\gamma(t)) \cdot V(\gamma(t)).\end{equation}

The right hand side is the product of two factors. The first is $u - f_m(\gamma(t))$, which is strictly negative by the previous assertion, and the second is $\nabla g_m(\gamma(t)) \cdot V(\gamma(t)) \geq 0$ which is nonnegative by Assertion 2. Thus the derivative of $t \longmapsto (g_m \circ \gamma)(t)$ is nonpositive, and so the map is nonincreasing. In particular, since at $t = 0$ we have $(g_m \circ \gamma)(t) = g_m(q) \leq u_2$, this same inequality holds for all $t \geq 0$.
\end{proof}

{\it Assertion 5.} For any $p \in X_{[u_1,u_2]}$ such that $u < g_m(p) \leq u_2$ the inequality \[(u-g_m(p)) \nabla f(p) \cdot V(p) < 0\] holds true.

\begin{proof} Since $u - g_m(p) < 0$, we only need to prove that $\nabla g_m(p) \cdot V(p) > 0$. By Assertion 2, this scalar product is always nonnegative and it is zero precisely when $p$ is a critical point of $g_m|_X$. Now, the choice of $u_1$ and $u_2$ guarantees that the critical points that $g_m|_Y$ may have in the set $X_{[u_1,u_2]}$ are all contained in $X_u$. Since $p \not\in X_u$, the assertion follows.
\end{proof}

{\it Assertion 6.} $g_m(\gamma(t)) \rightarrow u$ as $t \rightarrow +\infty$.
\begin{proof} Notice that $t \longmapsto (g_m \circ \gamma)(t)$ must indeed converge to some $u_*$ as $t \rightarrow +\infty$ because according to the computation in Assertion 4 it is a monotonous nonincreasing function bounded below by $u$. Let us assume that $u_*$ is strictly larger than $u$ and arrive at a contradiction.

Set $D(p) = (u-g_m(p)) \nabla g_m(p) \cdot V(p)$ for brevity. Let $X_{u_*} = \{p \in Y : g_m(p) = u_*\}$. This set is closed in $Y$, so it is compact. Also, the previous assertion says that $D(p) < 0$ for every $q \in X_{u_*}$. Since $X_{u_*}$ is compact and $D$ is continuous, there is an $\epsilon < 0$ such that $D(p) < \epsilon$ for every $p \in X_{u_*}$. In fact, more is true: there is a neighbourhood $U$ of $X_{u_*}$ in $X_{[u_1,u_2]}$ where the same inequality holds; that is, $D(p) < \epsilon$ for every $p \in U$.

We are almost finished. Since $g_m(\gamma(t)) \rightarrow u_*$ as $t \rightarrow +\infty$, there exists $T > 0$ such that $\gamma(t) \in U$ for every $t > T$. By the mean value theorem, for any $t$ there exists $\xi_t$ between $t$ and $t+1$ such that \begin{equation} \label{eq:mvt} g_m(\gamma(t+1)) - g_m(\gamma(t)) = \frac{{\rm d}}{{\rm d}t} (g_m \circ \gamma)(\xi_t) = D(\gamma(\xi_t)),\end{equation} where in the last equality we have used equation \eqref{eq:der}. Let us consider what happens in the above expression when $t \rightarrow +\infty$. Since $\xi_t$ lies between $t$ and $t+1$, as soon as $t > T$ we also have $\xi_t > T$ and therefore $\gamma(\xi_t) \in U$, which entails $D(\gamma(\xi_t)) < \epsilon$. Thus the right hand side of \eqref{eq:mvt} is bounded away from $0$ (recall that $\epsilon < 0$). However, its left hand side converges to $0$ as $t \rightarrow +\infty$ because both summands converge to $u_*$. This contradiction finishes the proof.
\end{proof}

\begin{proof}[Proof of Proposition \ref{prop:flujo}] (1) We have seen that for an initial condition $q = \gamma(0)$ satisfying $u < g_m(q) \leq u_2$, the trajectory $\gamma(t)$ remains in the set $\{p \in Y : u < g_m(p) \leq u_2\}$ for every $t \geq 0$. Evidently, if the initial condition $q$ satisfies $u_1 \leq g_m(q) < u$, similar arguments show that $\gamma(t)$ remains in the set $\{p \in Y : u_1 \leq g_m(p) < u\}$ for all $t \geq 0$. The remaining case, $g_m(q) = u$, is very simple: $q$ is then a zero of the vectorfield $W$ and so $\gamma(t) = q$ for every $t \in \mathbb{R}$. Summing up, for an initial condition $q \in X_{[u_1,u_2]}$ the trajectory $\gamma$ remains in the set $X_{[u_1,u_2]}$. Part (1) of the proposition is just a restatement of this, since in terms of the flow the trajectory $\gamma$ with initial condition $q = \gamma(0)$ is simply $\gamma(t) = \varphi(q,t)$.

(2) Find $u'_1$ and $u'_2$ such that $u_1 < u'_1 < u < u'_2 < u_2$ and $X_{[u'_1,u'_2]} \subseteq U$. In accordance with the notation we have been using so far, denote \[X_{(u'_1,u'_2)} = \{p \in Y : u'_1 < g_m(p) < u'_2\},\] which is an open subset of $X_{[u_1,u_2]}$ by continuity of $g_m$. In fact it is a neighbourhood of $X_u$ in $X_{[u_1,u_2]}$, so by Assertion 6 for each $q \in X_{[u_1,u_2]}$ there exists $t_q \geq 0$ such that $\varphi(q,t_q) \in X_{(u'_1,u'_2)}$. Now the continuity of $\varphi$ guarantees that $q$ has an open neighbourhood $U_q$ in $X_{[u_1,u_2]}$ such that $\varphi(U_q \times \{t_q\}) \subseteq X_{(u'_1,u'_2)}$. In particular $\varphi(U_q \times \{t_q\}) \subseteq X_{[u'_1,u'_2]}$, and by part (1) of this proposition (applied to $X_{[u'_1,u'_2]}$ rather than $X_{[u_1,u_2]}$) we see that $\varphi(U_q \times \{t\}) \subseteq X_{[u'_1,u'_2]}$ for every $t \geq t_q$. The $U_q$ cover the compact set $X_{[u_1,u_2]}$, so a finite family of them cover it too, say $U_{q_1},U_{q_2},\ldots,U_{q_r}$. Let $T$ be the maximum of $t_{q_1},t_{q_2},\ldots,t_{q_r}$. Then whenever $t \geq T$ we have that $\varphi(q,t) \in X_{[u'_1,u'_2]}$ for every $t \in X_{[u_1,u_2]}$, proving the proposition.
\end{proof}

We are finally ready to prove Lemma \ref{lem:morse}. For the convenience of the reader, we restate it here:

\begin{lem} Given any neighbourhood $U$ of $X_u$ in $X_{[u_1,u_2]}$ there exists a continuous mapping $r : X_{[u_1,u_2]} \longrightarrow X_{[u_1,u_2]}$ such that:
\begin{enumerate}
        \item[(1)] $r(p) \in U$ for every $p \in X_{[u_1,u_2]}$,
        \item[(2)] $r$ is homotopic to the identity in $X_{[u_1,u_2]}$.
\end{enumerate}
\end{lem}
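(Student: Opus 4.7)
The plan is to build $r$ directly from the flow $\varphi$ constructed above, exploiting the two properties collected in Proposition \ref{prop:flujo}. Concretely, apply part (2) of that proposition to the given neighbourhood $U$ of $X_u$ in $X_{[u_1,u_2]}$ in order to obtain a time $T > 0$ such that $\varphi(q,T) \in U$ for every $q \in X_{[u_1,u_2]}$. Then define
\[
r : X_{[u_1,u_2]} \longrightarrow X_{[u_1,u_2]} \ \ ; \ \ r(p) := \varphi(p,T).
\]
The map $r$ is well defined as a map into $X_{[u_1,u_2]}$ because of the positive invariance established in part (1) of Proposition \ref{prop:flujo}, and it is continuous since $\varphi$ is continuous in both variables. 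Property (1) of the lemma is then immediate from the choice of $T$.

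For property (2), the natural homotopy between $r$ and the identity is the one provided by the flow itself. Define
\[
H : X_{[u_1,u_2]} \times [0,1] \longrightarrow X_{[u_1,u_2]} \ \ ; \ \ H(p,s) := \varphi(p,sT).
\]
This map is continuous (as a composition of continuous maps), satisfies $H(p,0) = \varphi(p,0) = p$ and $H(p,1) = \varphi(p,T) = r(p)$. The only point that needs to be checked is that $H$ really takes its values in $X_{[u_1,u_2]}$, but this is precisely what part (1) of Proposition \ref{prop:flujo} guarantees: starting at any $p \in X_{[u_1,u_2]}$ and flowing for any nonnegative time $sT$ keeps us inside $X_{[u_1,u_2]}$. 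Hence $H$ is a homotopy in $X_{[u_1,u_2]}$ from the identity to $r$, establishing (2).

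There is essentially no hard step left at this stage, because the substantive work has already been absorbed into Proposition \ref{prop:flujo} (in particular the asymptotic attraction of $X_u$ under the flow, which was the delicate Assertion 6). The only thing to watch out for is the distinction between flowing in $Y$ and flowing in $X_{[u_1,u_2]}$: without the positive invariance of $X_{[u_1,u_2]}$ the formula $\varphi(p,sT)$ would not give a valid homotopy inside $X_{[u_1,u_2]}$ even though it would still connect the identity and $r$ inside $Y$. This is precisely why part (1) of Proposition \ref{prop:flujo} is crucial, and after invoking it the proof of Lemma \ref{lem:morse} is essentially a one-line construction.
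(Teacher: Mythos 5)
Your proof is correct and follows exactly the same route as the paper: take $T$ from part (2) of Proposition \ref{prop:flujo}, set $r(p) = \varphi(p,T)$, and use the flow restricted to times in $[0,T]$ as the homotopy to the identity, invoking part (1) to ensure everything stays inside $X_{[u_1,u_2]}$. The only difference is that you spell out the homotopy $H(p,s)=\varphi(p,sT)$ and the role of positive invariance more explicitly, which the paper leaves implicit.
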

\begin{proof} According to Proposition \ref{prop:flujo} there exists $T \geq 0$ such that $\varphi(p,t) \in U$ for every $p \in X_{[u_1,u_2]}$ and every $t \geq T$. Let $r$ be defined by $r(p) := \varphi(p,T)$. By construction $r(p) \in U$, so indeed satisfies condition (1). Also, $r$ is homotopic to the identity: the flow $\varphi(p,t)$, for $0 \leq t \leq T$, provides a suitable homotopy. Thus the lemma is proved.
\end{proof}

\section{Concluding remarks}

In this paper we have considered the (topological) social choice problem over bounded sets of alternatives $X \subseteq \mathbb{R}^n$ that are defined by equality constraints $g_i(p) = c_i$. We have shown that the problem has a solution in the affirmative if and only if the values $c_i$ of the constraints that define the set $X$  are finely tuned in the following sense: each of them must be either the global maximum or the global minimum of the corresponding $g_i$ over the set defined by the remaining constraints $g_j(p) = c_j$, $j \neq i$. When this condition is satisfied we say that $X$ is optimally designed. This criterion is simple to state and check explicitly and it is elementary in its language, which we feel is a valuable feature that distinguishes it from the results that already exist in the literature.

It should be observed that the need for optimization emerges exclusively from the social choice problem itself and not as a consequence of any particular interpretation of the variables, the constraints, or any assumption concerning rationality, utility functions, or any other element related to Economics. In the particular case when the set of alternatives $X$ does have an economical interpretation, the same is true of the principle of optimal design. This was illustrated in the Introduction with a toy example where the condition of optimal design meant that designer of the problem, if any, should behave rationally in the sense of Economics. We emphasize again that the rationality of the agents involved in the actual choice plays absolutely no role in this reasoning.

There exists an extensive literature on how to avoid social choice paradoxes, for instance by constraining the way in which the agents choose their individual alternative (\cite{Arrow1951}, \cite{black1}, \cite{inada1}), by weakening the axioms that an aggregation rule needs to satisfy (\cite{sen1}), or by requiring that the agents provide richer information as their input for the aggregation rule (\cite{deschampsgevers1}, \cite{sen3}). The interested reader can find many more references in \cite{handbook}. We see that in our present framework, requiring that the designer of the set of alternatives behaves in the way that is almost axiomatic in Economics; that is, rationally, is a necessary and sufficient condition to avoid a social choice paradox.

\section*{Acknowledgments}

The authors warmly thank Carlos Herv\'es, Francisco Marhuenda, Juan Pablo Rinc\'on, and Mar\'ia del Carmen Rold\'an for their critical reading of the paper.

Juan A. Crespo is supported by the Ministerio de Econom\'ia y Competitividad (Spain) grant ECO2013-42710-P and the Comunidad de Madrid grant MadEco-CM (S2015/HUM-3444).

J.J. S\'anchez-Gabites is supported by the Ministerio de Econom\'ia y Competitividad (Spain) grant MTM2015-63612-P and the Comunidad de Madrid grant MadEco-CM (S2015/HUM-3444).

\end{document}